\newtheorem{thm}{Theorem}[section]
\newtheorem{prop}[thm]{Proposition}
\newtheorem{lema}[thm]{Lemma}
\newtheorem{conj}[thm]{Conjecture}
\theoremstyle{definition}
\newtheorem{defn}[thm]{Definition}
\newcommand{\Q}{\mathbb{Q}}
\newcommand{\Z}{\mathbb{Z}}
\newcommand{\Pp}{\mathbb{P}}
\newcommand{\Aa}{\mathbb{A}}
\newcommand{\kk}{\bar{k}}
\newcommand{\Cc}{\mathcal{C}}
\newcommand{\Hc}{\mathcal{H}}
\newcommand{\Uc}{\mathcal{U}}
\newcommand{\Vc}{\mathcal{V}}
\newcommand{\Wc}{\mathcal{W}}
\newcommand{\Xc}{\mathcal{X}}
\newcommand{\Cb}{\mathcal{C}\hskip-1.5pt\textit{b}} % for Tower of Curves
\newcommand{\Ll}{\mathcal{L}}
\newcommand{\KK}{\mathcal{K}}
\newcommand{\ba}{\mathbf{a}}
\DeclareMathOperator{\rk}{rank}
\DeclareMathOperator{\enn}{End}
\DeclareMathOperator{\Sym}{Sym}
\title[Rank of Jacobians of $y^s=ax^r+b$]{On the Rank of Jacobian Varieties of the Curves $y^s=ax^r+b$}
\author{Sajad Salami}
\address{Inst\'{i}tuto da Matem\'{a}tica e Estat\'{i}stica, Universidade Estadual do Rio de Janeiro (UERJ), Rio de Janeiro, Brazil}
\email{sajad.salami@ime.uerj.br}
\date{September 27, 2025}
\subjclass[2020]{11G30, 14H40, 14G05, 11G50}
\keywords{Mordell-Weil rank, Jacobian variety, Lang's conjecture, Diophantine geometry, complete intersection curves, uniformity.}
\begin{document}
	
	\begin{abstract}
		We study the family of algebraic curves of genus $\geq 1$ defined by the affine equations $y^s=ax^r+b$ over a number field $k$, where $r \geq 2$ and $s\geq 2$ are fixed integers. Assuming the strong version of Lang's conjecture on varieties of general type, we prove that the Mordell-Weil rank of the Jacobian varieties of these curves is uniformly bounded. The proof proceeds by constructing a parameter space for curves in the family with a given number of rational points and analyzing the geometry of its fibers, which are shown to be complete intersection curves of increasing genus.
	\end{abstract}
	
	\maketitle
	
	\tableofcontents
	
	\section{Introduction}

The study of rational solutions to polynomial equations, a field known as Diophantine geometry, is one of the oldest and most profound branches of mathematics. At its heart lies the study of abelian varieties, which are projective algebraic varieties whose points form a group. A foundational result in this area is the Mordell-Weil theorem, which states that for any abelian variety $A$ defined over a number field $k$, the group of $k$-rational points, denoted $A(k)$, is a finitely generated abelian group. This structure theorem implies that $A(k)$ is isomorphic to a direct sum of a finite torsion part and a free abelian part: $A(k) \cong A(k)_{\text{tors}} \oplus \mathbb{Z}^r$. The non-negative integer $r$, known as the Mordell-Weil rank, is a central and deeply mysterious invariant that measures the size of the infinite part of the group of rational points \cite{Silverman1986}.

Let $k$ be a number field and let $\kk$ be a fixed algebraic closure. For an algebraic curve $C$ defined over $k$, by the Mordell-Weil theorem, the group of $k$-rational points $J_C(k)$ on its Jacobian variety is a finitely generated abelian group. 
%Its rank as a $\Z$-module, denoted $r(J_C(k))$, is a central invariant in Diophantine geometry. 4
A fundamental and long-standing question asks:
\begin{quote}
	\textit{For a fixed number field $k$ and integer $g \geq 1$, is the rank $r(J_C(k))$ bounded as $C$ varies over all smooth algebraic curves of genus $g$ defined over $k$?}
\end{quote}
For elliptic curves ($g=1$) over $k=\Q$, this question remains one of the great mysteries of modern number theory. While a folklore conjecture suggests the rank can be arbitrarily large \cite{Silverman1986}, recent heuristics in  \cite{Park2019} predict that the rank is bounded, and there are only finitely many elliptic curves of rank $> 21$ over $\Q$.

Significant theoretical evidence for rank boundedness has emerged from the circle of ideas surrounding Lang's conjectures. Pasten, for instance, has shown that a conjecture of Lang on Diophantine approximation implies Honda's conjecture on the boundedness of ranks of elliptic curves \cite{Pasten2019}. These approaches link rank boundedness to deep conjectures about the distribution of rational points.

An alternative, geometric approach was pioneered by Yamagishi \cite{Yamagishi2003}, who studied the family of elliptic curves $y^2=ax^4+bx^2+c$. Yamagishi proved, subject to the strong Lang conjecture, that the rank of these curves is bounded. This was achieved by constructing a parameter space for curves in the family possessing a given number of rational points and showing that for a sufficiently large number of points, the relevant parameter spaces become varieties of general type.

This paper extends Yamagishi's geometric method to the broad two-parameter family of curves given by the affine equation
\begin{equation} \label{curve}
	C: y^s=ax^r+b,
\end{equation}
where $r, s \geq 2$ are fixed integers. Our main result provides a strong, albeit conditional, answer to the rank boundedness question for this family.

\begin{thm} \label{main}
	Let $k$ be a number field containing a primitive $s$-th root of unity. Assume that the strong version of Lang's conjecture (Conjecture \ref{conj1}) holds. Then the rank of the Mordell-Weil group $J_C(k)$ is uniformly bounded as $J_C$ varies over all Jacobian varieties of smooth curves $C$ defined over $k$ by equation \eqref{curve} with genus $g_{r,s}(C) \geq 1$.
\end{thm}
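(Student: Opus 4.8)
The plan is to follow Yamagishi's geometric strategy, with the main new device being to pass from the curve $C$ to its symmetric power $\Sym^{g}C$, where $g=g_{r,s}(C)$, so that the Mordell--Weil rank becomes visible through rational points even when $g\ge 2$. First I would record the geometry of $C$: a Riemann--Hurwitz computation for $(x,y)\mapsto x$ gives the genus $g_{r,s}$, and the hypothesis $\zeta_s\in k$ makes $\mu_s$ act on $C$ over $k$ via $y\mapsto\zeta_s y$, so $\Z[\zeta_s]\hookrightarrow\enn(J_C)$. Fixing a $k$-rational point $P_0$ (the point at infinity when $\gcd(r,s)=1$; the general case reduces to this), Riemann--Roch shows every degree-$g$ class is effective, so the Abel--Jacobi map $\Sym^{g}C\to J_C$ is surjective with projective-space fibres, hence surjective on $k$-points. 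Therefore, if $\rk J_C(k)\ge\rho$, the group $\Sym^{g}C(k)$ contains at least $2^{\rho}$ distinct elements, and it suffices to bound uniformly over the family how large a configuration of rational points of $\Sym^{g}C$ can be. Such a point is encoded by a pair $(u,v)$ of polynomials over $k$ with $\deg u=g$, $\deg v<g$, subject to $u(x)\mid v(x)^{s}-ax^{r}-b$ --- a condition polynomial in the coefficients of $u,v$ and affine-linear in $(a,b)$.

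Next I would build the parameter space. Let $\Wc_{N}$ be the quasi-affine variety of tuples $\bigl(a,b,(u_j,v_j)_{1\le j\le N}\bigr)$ with $u_j\mid v_j^{s}-ax^{r}-b$ for all $j$ and the $u_j$ pairwise coprime; a $k$-point records a curve in the family together with $N$ rational points of its $g$-th symmetric power in general position, and a dimension count gives $\dim\Wc_{N}=Ng+2$. (For $g=1$ this is exactly Yamagishi's space of curves with $N$ marked rational points.) The key geometric input is the fibration $\pi\colon\Wc_{N}\to\Aa^{Ng+1}$ remembering $a$ and the coefficients of the $u_j$: eliminating $b$ from the congruences, the fibre over a generic point is cut out in the coefficients of $(v_1,\dots,v_N)$ by $Ng-1$ equations, each of degree $s$ after homogenization, and a Jacobian check (including at the Fermat-type points at infinity) shows that for generic parameters this fibre $D$ is a smooth complete intersection curve in $\Pp^{Ng}$. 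Adjunction then gives
\[
2g(D)-2=\bigl((Ng-1)s-(Ng+1)\bigr)\,s^{\,Ng-1},
\]
which grows without bound; in particular $g(D)\ge 2$, and $D$ is of general type, once $N\ge N_{1}(r,s)$. If $D$ fails to be geometrically irreducible for some parameters, its components are themselves complete intersection curves and the same estimate applies componentwise.

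With the fibration in hand, for $N\ge N_{1}$ the morphism $\pi$ is a family of general-type curves of increasing genus; computing $\omega_{\Wc_{N}}$ from the defining equations --- equivalently, using the growth of $g(D)$ to force positivity of the relative canonical bundle over the $(Ng+1)$-dimensional base --- shows that $\Wc_{N}$ itself is of general type for $N\ge N_{2}(r,s)$. Conjecture \ref{conj1} then applies: there is a proper closed $\mathcal{Z}_{N}\subsetneq\Wc_{N}$, finite over $\Aa^{Ng+1}$, with $|D(k)\setminus\mathcal{Z}_{N}|$ bounded uniformly in the parameters, whence $|\Wc_{N}|_{(a,u_1,\dots,u_N)}(k)|\le B_{0}(r,s,k)$ for all base points. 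Finally, assuming for contradiction that the ranks are unbounded, one produces curves $C_{a_n,b_n}$ with $\rk J_{C_{a_n,b_n}}(k)\to\infty$ whose parameters are Zariski dense in an irreducible $Y\subseteq\Aa^{2}$ with $\dim Y\ge 1$; since the rank is large, $\Sym^{g}C_{a_n,b_n}(k)$ is Zariski dense in $\Sym^{g}C_{a_n,b_n}$, so the corresponding fibres of $\Wc_{N}\to\Aa^{2}$ have dense $k$-points, and as $n$ varies this forces $\Wc_{N}(k)$ to be Zariski dense in $\Wc_{N}$, contradicting general type. A Noetherian descent along the (proper closed) high-rank locus --- applying the same argument to the one-parameter subfamilies it contains, and noting that isotrivial subfamilies and the finitely many remaining curves have finite, hence bounded, rank --- upgrades this to a uniform bound over the whole family.

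The principal obstacle is two-fold and both parts are geometric. First, one must show that the \emph{total space} $\Wc_{N}$, not merely its fibres, is of general type; this is exactly where the \emph{increasing} genus of the complete intersections is indispensable, since it is what dominates the positivity lost to the large base $\Aa^{Ng+1}$, and it requires careful control of the singularities of $\Wc_{N}$ and of the exceptional loci introduced in resolving them. Second, and more subtly, one must make precise the passage from ``$\rk J_{C}(k)$ large'' to ``$\Sym^{g}C(k)$ Zariski dense in $\Sym^{g}C$'' uniformly enough in the family to run the descent, controlling --- via the $\Z[\zeta_s]$-module structure on $J_C(k)$ and the behaviour of Abel--Jacobi in families --- the simple isogeny factors of $J_{C}$ that could a priori carry rank zero. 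Everything else is lengthy but routine adjunction-and-dimension bookkeeping.
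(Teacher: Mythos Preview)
Your route differs from the paper's in two essential respects. First, the paper never passes to $\Sym^{g}C$: it parameterizes $(n{+}1)$-tuples of points on $C$ itself by their $x$-coordinates $\ba_n=(\alpha_0,\dots,\alpha_n)$, so that the fibre $\Xc_{\ba_n}$ is an explicit complete intersection of $n{-}1$ degree-$s$ hypersurfaces in $\Pp^n$ (equations~\eqref{eq3}), with genus and gonality read off directly (Proposition~\ref{p1}). Second, and this dissolves your ``principal obstacle'', the paper never shows that any total space is of general type. Once the fibres $\Xc_{\ba_n}$ have genus $\ge 2$ it simply invokes the Caporaso--Harris--Mazur uniformity and Correlation Theorems (Theorems~\ref{UB}--\ref{corel}) as black boxes: under Lang's conjecture these convert \emph{any} family of higher-genus curves into the required uniform bound, with no Kodaira-dimension computation on the total parameter space required.

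For the bridge from rank to number of points the paper also takes a different route: it cites the Dimitrov--Gao--Habegger inequality $\#C(k)\le c^{1+r}$ (Theorem~\ref{dgh}) and asserts that $r_j\to\infty$ forces $\#C_j(k)\to\infty$, then contradicts the uniform emptiness of $\Cc_{\ba_m}(k)$ for large $m$. When $g=1$ this step is immediate without DGH, since $C=J_C$; when $g\ge 2$, however, the DGH inequality is an \emph{upper} bound on $\#C(k)$ and does not by itself give the stated implication, so your $\Sym^{g}C$ device is in fact the natural repair --- it converts large rank directly into many rational points on a variety to which the parameter-space machinery applies. In short: the paper's argument is much shorter because it outsources both of your obstacles to Caporaso--Harris--Mazur and to DGH, while your more self-contained $\Sym^{g}C$ construction genuinely addresses a gap that the paper's invocation of DGH leaves open in the $g\ge 2$ case, at the cost of the two unresolved obstacles you flag.
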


In particular, the Mordell-Weil rank of the elliptic curves   $E: y^2= x^3+b$ over $\Q$ with   is bounded.
This result provides new evidence for the rank boundedness conjecture from a geometric perspective, distinct from that of Diophantine approximation given in \cite{Pasten2019}.

The primary technical innovations of this paper, which allow for the generalization from Yamagishi's specific family to the curves \eqref{curve}, are:
\begin{enumerate}
	\item The construction of a parameter variety $\Wc_n$ (via the theory of twists) that parameterizes curves in the family \eqref{curve} together with $n+1$ rational points. We give an explicit birational map from $\Wc_n$ to a complete intersection variety $\Xc_n$ for general exponents $(r,s)$.
	\item A detailed geometric analysis of the fibers $\Xc_{\ba_n}$ of the variety $\Xc_n$. We compute their genus and, critically, establish a lower bound on their gonality (Proposition \ref{p1}). This shows that for sufficiently large $n$, these fibers are curves of general type, allowing the powerful machinery of Faltings' theorem and uniformity conjectures to be applied.
\end{enumerate}

The proof proceeds by first establishing an unconditional finiteness result. For a fixed set of $n+1$ distinct $x$-coordinates $\alpha_0, \dots, \alpha_n$, we denote by $\Cc_{\ba_n}(k)$ the set of curves in our family passing through points with these $x$-coordinates.

\begin{thm} \label{main1}
	Let $k$ be a number field containing a primitive $s$-th root of unity. Let $n_0=4$ if $s=2$ and $n_0=3$ otherwise. For any choice of $n+1$ distinct elements $\alpha_0, \ldots, \alpha_n \in k$ (denoted by $\ba_n$) and for any integers $r, s \geq 2$ such that $g_{r,s}(C) \geq 1$, the set $\Cc_{\ba_n}(k)$ is infinite if $n < n_0$ and finite if $n \geq n_0$. In particular, the rank $r(J_C(k))$ is bounded for all $J_C$ associated to curves in $\Cc_{\ba_n}(k)$ when $n \geq n_0$.
\end{thm}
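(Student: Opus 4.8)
The plan is to convert $\Cc_{\ba_n}(k)$ into the set of $k$-rational points of a single curve and then play Faltings' theorem against an explicit parametrization. First I would make precise a finite correspondence $\Xc_{\ba_n}(k) \to \Cc_{\ba_n}(k)$: if $C\colon y^s = ax^r + b$ lies in $\Cc_{\ba_n}(k)$, then for each $i$ there is $\beta_i \in k$ with $\beta_i^s = a\alpha_i^r + b$, and since $a\alpha_i^r + b$ is affine-linear in $(a,b)$ the tuple $(\beta_0^s, \dots, \beta_n^s)$ satisfies $n-1$ linear relations with coefficients determined by $\ba_n$; projectivizing to absorb the scaling $y \mapsto ty$ (which preserves the isomorphism class), these relations rewritten in the $\beta_i$ are precisely the equations cutting out the fiber $\Xc_{\ba_n}$ of the complete-intersection variety $\Xc_n$ that the twist construction provides (birationally, via $\Wc_n$). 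Conversely a point of $\Xc_{\ba_n}(k)$ recovers $(a,b)$ and the $\beta_i$ up to a choice of $s$-th roots of unity. Deleting the degenerate locus — $ab = 0$, or $g_{r,s}$ below $1$, or two of the $\alpha_i^r$ equal — removes only finitely many points of the curve $\Xc_{\ba_n}$. Hence $\Cc_{\ba_n}(k)$ is finite whenever $\Xc_{\ba_n}(k)$ is, and infinite whenever $\Xc_{\ba_n}$ has infinitely many $k$-points (in particular whenever it is $k$-rational).

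For $n \geq n_0$ I would invoke Proposition \ref{p1}: its genus computation shows that the smooth model of $\Xc_{\ba_n}$ has genus $\geq 2$ exactly in this range — the breakpoint $n_0 = 4$ for $s = 2$ versus $n_0 = 3$ otherwise coming from that genus formula — while the gonality lower bound rules out $\Xc_{\ba_n}$ being a union of rational or elliptic components masquerading as a higher-genus complete intersection. Faltings' theorem then gives $\#\Xc_{\ba_n}(k) < \infty$, so $\#\Cc_{\ba_n}(k) < \infty$; and since finitely many curves have finitely many Jacobians, $\max_{C \in \Cc_{\ba_n}(k)} r(J_C(k))$ is a finite bound (ineffective, as it rests on Faltings).

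For $n < n_0$, Proposition \ref{p1} gives $\Xc_{\ba_n}$ of genus $\leq 1$. When the genus is $0$ one reads off a $k$-rational point from the ``diagonal'' member of the configuration, so a desingularization is $\Pp^1$ and $\Cc_{\ba_n}(k)$ is infinite. In the genus-$1$ boundary cases one argues instead with the diagonal/Fermat-type shape of the defining equations to produce, uniformly in $\ba_n$, a rational curve mapping non-constantly to $\Xc_{\ba_n}$ — equivalently a one-parameter sub-family of pairwise non-isomorphic curves in $\Cc_{\ba_n}(k)$, i.e.\ a $k$-point of $\Xc_{\ba_n}$ of infinite order going beyond the torsion forced by its symmetry group — again yielding infinitude.

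The step I expect to be the main obstacle is exactly this last one: the genus-$1$ cases lying just below the threshold ($n = 3$ when $s = 2$, and $n = 2$ when $s \geq 3$). Showing that for \emph{every} admissible configuration $\ba_n$ the curve $\Xc_{\ba_n}$ has infinitely many $k$-points — not merely the obvious, and degenerate, ones that it would have if it were an elliptic curve of rank $0$ — is the non-generic heart of the argument. The finiteness half for $n \geq n_0$, by contrast, is a formal consequence of Proposition \ref{p1} and Faltings' theorem, and the rank bound follows at once.
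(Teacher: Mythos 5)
Your proposal follows essentially the same route as the paper: the correspondence $\Cc_{\ba_n}(k) \leftrightarrow \Xc_{\ba_n}(k)$ via Proposition \ref{pr1}, Faltings' theorem applied to the genus-$\geq 2$ fibers for $n \geq n_0$ (with the rank bound coming for free from finiteness), and the low-genus analysis of Theorem \ref{thm:low_genus_cases} for $n < n_0$. The obstacle you flag --- proving infinitude of $\Xc_{\ba_n}(k)$ for \emph{every} admissible $\ba_n$ in the genus-one cases --- is genuine, and the paper does not close it either: Theorem \ref{thm:low_genus_cases} only asserts positive rank for \emph{infinitely many} choices of $\ba_n$, and the proof of Theorem \ref{main1} correspondingly retreats to saying $\Cc_{\ba_n}(k)$ ``can be'' infinite, which is weaker than the theorem as stated. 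Note also that your claim that $n < n_0$ forces genus $\leq 1$ (inherited from the theorem's phrasing) fails for $s \geq 4$: Proposition \ref{p1} gives $g(\Xc_{\ba_2}) = 1 + s(s-3)/2 \geq 3$ when $s \geq 4$, so the fiber is already of general type at $n = 2 < n_0$ and the asserted infinitude cannot hold there.
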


We then use consequences of Lang's conjecture to show that the size of $\Cc_{\ba_n}(k)$ is uniformly bounded, independent of the choice of $\ba_n$, and is eventually empty for large $n$.

\begin{thm} \label{main2}
	Assume the weak version of Lang's conjecture holds over $k$. For integers $r, s \geq 2$ with $g_{r,s}(C) \geq 1$, there exists a uniform bound $M_0 = M_0(k, r, s, n)$ such that $\#\Cc_{\ba_n}(k) < M_0$ for all $n \geq n_0$ and all choices of $\ba_n$. Furthermore, if $g_{r,s}(C) \geq 2$, there exists an integer $m_0 > n_0$ such that $\Cc_{\ba_m}(k) = \emptyset$ for all $m \geq m_0$.
\end{thm}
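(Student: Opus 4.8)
\emph{Proof strategy for Theorem \ref{main2}.} The plan is to deduce both assertions from the uniformity theorem of Caporaso, Harris and Mazur: the weak Lang conjecture over $k$ implies that for every integer $g \geq 2$ there is a bound $B(k,g)$, depending only on $k$ and $g$, for the number of $k$-rational points on any smooth projective curve of genus $g$ over $k$. The only inputs needed beyond this external result are the complete intersection model $\Xc_n$ built before Theorem \ref{main1} and the genus and gonality estimates of Proposition \ref{p1}.

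For the uniform bound, fix $n \geq n_0$. As in the proof of Theorem \ref{main1}, the twist construction provides a surjection from the $k$-points of the fiber of $\Wc_n$ over $\ba_n$ onto $\Cc_{\ba_n}(k)$, with fibers of size at most $s^{n+1}$ (coming from the choices of $s$-th roots of the values $a\alpha_i^{r}+b$, which lie in $k$ because $\mu_s \subset k$), and the birational map from $\Wc_n$ to $\Xc_n$ matches these $k$-points with $k$-points of the fiber $\Xc_{\ba_n}$ up to a set whose cardinality is bounded solely in terms of $n, r, s$. Hence $\#\Cc_{\ba_n}(k) \leq \#\Xc_{\ba_n}(k) + c_1(n,r,s)$. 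Now $\Xc_{\ba_n}$ is a complete intersection curve whose multidegree is a function of $(n,r,s)$ only, so both its arithmetic genus and its number of singular points are bounded by some $G = G(n,r,s)$; and by Proposition \ref{p1}, for $n \geq n_0$ the curve $\Xc_{\ba_n}$ is geometrically integral and its normalization $\widetilde{\Xc}_{\ba_n}$ has genus satisfying $2 \leq g(\widetilde{\Xc}_{\ba_n}) \leq G$. Applying the Caporaso--Harris--Mazur bound to $\widetilde{\Xc}_{\ba_n}$ and adding the at most $G$ singular $k$-points of $\Xc_{\ba_n}$ lost under normalization, we get $\#\Xc_{\ba_n}(k) \leq \max_{2 \leq g \leq G} B(k,g) + G$, which does not involve $\ba_n$; so $M_0(k,r,s,n) := \max_{2 \leq g \leq G} B(k,g) + G + c_1 + 1$ works.

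For the emptiness statement the varieties $\Xc_n$ are not needed at all. Every smooth curve $C/k$ of the form \eqref{curve} has one and the same genus $g_{r,s}$, because it is the degree-$s$ cyclic cover of $\Pp^1$ ramified exactly over the $r$ roots of $ax^{r}+b$ and, when $s \nmid r$, over $\infty$; so $g_{r,s}$ depends only on $r$ and $s$. Assume $g_{r,s} \geq 2$. Then $\#C(k) \leq B(k,g_{r,s})$ for every such $C$, and in particular the $x$-coordinates of the points of $C(k)$ take at most $B(k,g_{r,s})$ distinct values. Consequently, as soon as $m + 1 > B(k,g_{r,s})$, no curve of the form \eqref{curve} passes through $k$-rational points with $m+1$ distinct prescribed $x$-coordinates, i.e.\ $\Cc_{\ba_m}(k) = \emptyset$ for every admissible $\ba_m$. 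It suffices to take $m_0 := \max\{\, n_0 + 1,\ B(k,g_{r,s})\,\}$, which is $> n_0$.

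The one step genuinely requiring care --- and the reason Proposition \ref{p1} must be proved for every choice of distinct $\alpha_0, \dots, \alpha_n \in k$ and not merely for a generic one --- is the uniformity, over all admissible $\ba_n$, both of the two-sided genus estimate $2 \leq g(\widetilde{\Xc}_{\ba_n}) \leq G(n,r,s)$ and of the geometric integrality of $\Xc_{\ba_n}$ (together with the fact that the birational identification of the fibers is well-behaved), since the configurations $\ba_n$ along which the fiber might degenerate still form an infinite family, and one single invocation of the Caporaso--Harris--Mazur bound has to cover them all. Once this is secured, what remains --- the finite-to-one passage from the fiber of $\Wc_n$ to $\Cc_{\ba_n}(k)$ and the pigeonhole argument for emptiness --- is routine.
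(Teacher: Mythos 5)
Your proposal is correct, and for the first assertion it follows essentially the paper's route: both you and the paper bound $\#\Cc_{\ba_n}(k)$ by transporting the count to the fiber curve $\Xc_{\ba_n}$, whose genus is a fixed $g(n,s)\geq 2$ for $n\geq n_0$ by Proposition \ref{p1}, and then invoking the Caporaso--Harris--Mazur uniformity bound (Theorem \ref{UB}); the paper simply cites the one-to-one correspondence of Proposition \ref{pr1} together with the birational map of Theorem \ref{thm2}, whereas you insert a finite-to-one count and a normalization step --- the latter is superfluous, since Proposition \ref{p1} already asserts smoothness of $\Xc_{\ba_n}$ for $\ba_n\in\Uc_{r,n}(k)$, but it does no harm. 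For the emptiness assertion your route is genuinely different and in fact cleaner: you observe that every smooth member of the family \eqref{curve} has one and the same genus $g_{r,s}\geq 2$, apply Theorem \ref{UB} directly to the curves $C$ themselves to get $\#C(k)\leq N(k,g_{r,s})$, and conclude by pigeonhole that $\Cc_{\ba_m}(k)=\emptyset$ once $m+1>N(k,g_{r,s})$, which gives an explicit $m_0$. The paper instead runs through the Correlation Theorem \ref{corel} applied to the fiber products of $\Cc\to\Pp^1_{(a,b)}$ and then appeals to the uniformity theorems --- essentially re-deriving the content of Theorem \ref{UB} for this family --- so your shortcut buys simplicity and an explicit bound, while the paper's detour only becomes relevant when one wants the field-independent refinement via the strong conjecture (Theorem \ref{UGB}), which is outside the statement as given. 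One shared caveat: both arguments for the first part implicitly take $\ba_n\in\Uc_{r,n}(k)$ (distinct $r$-th powers $\alpha_i^r$), which the phrase ``all choices of $\ba_n$'' glosses over; your second argument has the merit of not needing this restriction at all.
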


The strong version of Lang's conjecture provides bounds independent of the field $k$. These uniformity results, combined with a theorem of Dimitrov, Gao, and Habegger in \cite{Dimitrov2021} relating rank to the number of rational points, provide the ingredients for the proof of our main theorem.

This paper is organized as follows. In Section \ref{Dioph}, we recall the necessary background results. In Section \ref{rpavff}, we construct the parameter space $\Wc_n$. In Section \ref{curves}, we introduce the birationally equivalent family of complete intersection curves $\Xc_n$ and analyze their geometry. In Section \ref{prof-finite}, we prove the finiteness and uniformity results (Theorems \ref{main1} and \ref{main2}). In Section \ref{proof-main-thm}, we provide the proof of our main result, Theorem \ref{main}. Finally, in Section \ref{examples}, we illustrate the construction with concrete examples, including a famous rank 17 elliptic curve.

\section{Preliminaries and Auxiliary Results}
\label{Dioph}

In this section we provide some of the well-know conjectures and results in Diophantine geometry that we will use in this work.

\subsection{Lang's Conjecture on Varieties of General Type}
A smooth projective variety $X$ over $\kk$ is of \textit{general type} if its Kodaira dimension $\kappa(X)$ equals its dimension. For curves, this corresponds to genus $g \geq 2$. The seminal result in this area is Faltings' theorem, formerly the Mordell conjecture.

\begin{thm}[Faltings, \cite{Faltings1986}]
	\label{faltings}
	If $C$ is a smooth algebraic curve of genus $g \geq 2$ over a number field $k$, then $\#C(K) < \infty$ for any finite extension $K$ of $k$.
\end{thm}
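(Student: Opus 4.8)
The plan is to follow Faltings' original route to the Mordell conjecture: reduce the finiteness of $C(K)$ to the Shafarevich conjecture for abelian varieties by way of the Kodaira--Parshin construction, and then establish the Shafarevich conjecture by controlling the Faltings height inside a fixed isogeny class. \textbf{Step 1 (Parshin's trick).} After a harmless base change we may assume $C(K)\neq\emptyset$ and fix a point to normalize. For each $P\in C(K)$ the Kodaira--Parshin construction produces a curve $C_P$ with a finite morphism $C_P\to C$ of bounded degree, unramified away from $P$, defined over an extension $K_P/K$ whose degree and ramification locus are bounded independently of $P$; moreover $g(C_P)$ is bounded and the Jacobian $J_{C_P}$ has good reduction outside a fixed finite set $S$ of places depending only on $C$. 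Distinct points $P$ give pairwise non-isomorphic covers. Hence, if one knew that only finitely many curves arise as such $C_P$, then the Hermite--Minkowski theorem (finitely many number fields of bounded degree unramified outside $S$) together with de Franchis's theorem (finitely many morphisms of bounded degree into a fixed curve) would force $C(K)$ to be finite.

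\textbf{Step 2 (Reduction to abelian varieties).} By the Torelli theorem, finiteness of the $C_P$ reduces to the Shafarevich conjecture: for fixed $g$, $K$, and $S$, there are only finitely many principally polarized abelian varieties of dimension $g$ over $K$ with good reduction outside $S$. One first reduces this to its \emph{isogeny} form --- it suffices that each $K$-isogeny class of such varieties be finite --- using Hermite--Minkowski to bound the auxiliary level structures attached to polarizations. Since the coarse moduli space $\mathcal{A}_g$ is quasi-projective and the stable Faltings height has the Northcott property on it, the whole problem reduces to showing that $h_{\mathrm{st}}(B)$ stays bounded as $B$ ranges over a single isogeny class.

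\textbf{Step 3 (Height in an isogeny class --- the heart, and the main obstacle).} Using Grothendieck's semistable reduction theorem and Néron models one expresses $h_{\mathrm{st}}$ through the metrized Hodge bundle on the semistable model and tracks its variation under $\ell$-isogenies: the change is governed by the $G_K$-action on the Tate module $V_\ell$ and vanishes for all but finitely many steps. The indispensable inputs are the semisimplicity of $V_\ell(A)$ as a $G_K$-module and the Tate conjecture $\enn(A)\otimes\Q_\ell\cong\enn_{G_K}(V_\ell(A))$, which Faltings proves by a bootstrap: the finiteness of abelian varieties of bounded height, applied to the abelian varieties cut out by $G_K$-stable subgroups of $A[\ell^m]$, forces the representation to decompose. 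I expect this step to be the decisive obstacle, since it simultaneously requires the Arakelov-theoretic behaviour of the Hodge bundle, Raynaud's theory of finite flat group schemes (to bound the height jump at primes above $\ell$), and the module-theoretic semisimplicity argument.

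\textbf{Alternative route.} One could instead follow Vojta's Arakelov-theoretic proof, as simplified by Bombieri: embed $C$ into its Jacobian and argue by contradiction, constructing via Siegel's lemma an auxiliary global section of small height on a product $C^N$, bounding its index through an arithmetic product theorem, and deriving a contradiction from Mumford's gap principle together with Roth/Dyson-type estimates. Here the main difficulty is the arithmetic product theorem and the explicit Arakelov intersection computations on $C\times C$.
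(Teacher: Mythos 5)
This statement is Faltings' theorem, which the paper does not prove: it is quoted verbatim from the literature (with the citation to Faltings) as a background input, so there is no in-paper argument to measure your proposal against. Your outline is a faithful summary of the two known routes --- Faltings' original proof via Parshin's trick, the Shafarevich conjecture, and the height/Tate-conjecture bootstrap, and the Vojta--Bombieri Diophantine-approximation proof --- and you correctly identify where the real work lies in each (the behaviour of the Faltings height under isogeny and the semisimplicity/Tate-module argument in the first; the arithmetic product theorem and the Arakelov computations on $C\times C$ in the second). That said, what you have written is a roadmap, not a proof: every substantive step (the Kodaira--Parshin construction with its good-reduction and non-isomorphy properties, the reduction of Shafarevich to its isogeny form, the Northcott property of the stable height on $\mathcal{A}_g$, and above all the boundedness of the height in an isogeny class) is asserted and deferred rather than established. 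For a result of this depth that the paper itself only cites, that is the appropriate level of detail, but you should be explicit that you are invoking the theorem as known rather than claiming to have reproved it.
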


Lang proposed a far-reaching generalization of this to higher dimensions.

\begin{conj}[Lang, \cite{Lang1986, Lang1991}]
	\label{conj1}
	Let $k$ be a number field and $X$ a smooth variety of general type over $k$.
	\begin{enumerate}
		\item[(a)] \textbf{(Weak version)} The set of $k$-rational points $X(k)$ is not Zariski dense in $X$.
		\item[(b)] \textbf{(Strong version)} There exists a proper Zariski-closed subset $Z \subset X$ such that for any finite extension $K$ of $k$, the set of $K$-rational points on $X \setminus Z$ is finite.
	\end{enumerate}
\end{conj}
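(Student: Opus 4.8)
The plan is to attack Conjecture \ref{conj1} through the standard reduction to a Vojta-type height inequality, since the strong version (b) follows once the arithmetic height of rational points is controlled by the positivity of the canonical class. For $X$ of general type $K_X$ is big, so some multiple satisfies $m K_X \sim A + E$ with $A$ ample and $E$ effective. If one had, for every $\epsilon > 0$, an inequality $h_{K_X}(P) \le \epsilon\, h_A(P) + O(1)$ valid for all points $P$ outside a proper Zariski-closed set $Z_\epsilon$ — this is Vojta's conjectural arithmetic Second Main Theorem — then bigness of $K_X$ would force $h_A(P)$ to be bounded off $Z_\epsilon \cup \mathrm{Supp}(E)$, and Northcott's theorem would give finiteness there over any finite extension $K/k$, which is precisely statement (b); the weak version (a) follows a fortiori. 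My first step is therefore to pin down the exact inequality required and to check that it degrades correctly under finite base change, so that the exceptional locus $Z$ may be taken independent of $K$, as (b) demands.

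The second step disposes of every $X$ that admits a nontrivial map to an abelian variety, where the conjecture is already a theorem. Starting from the Albanese map $\mathrm{alb}\colon X \to \mathrm{Alb}(X)$ and the Ueno fibration of its image, one obtains a dominant morphism $X \to Y_0$ onto a subvariety $Y_0$ of an abelian variety that is itself of general type; by Kawamata's additivity theorem for fibrations over such bases, the general fibres are again of general type and of strictly smaller dimension. The Faltings--Vojta resolution of the Mordell--Lang conjecture makes $Y_0(K)$ finite (a general-type subvariety contains no positive-dimensional translate of an abelian subvariety), so $X(K)$ lies over finitely many points of $Y_0$ and is confined to finitely many fibres; an induction on $\dim X$, with base case the curves of genus $\ge 2$ of Theorem \ref{faltings}, then closes this branch whenever $\dim Y_0 > 0$.

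The hard part — and the reason Conjecture \ref{conj1} is still open — is the complementary case $\dim Y_0 = 0$: varieties of general type whose Albanese image is trivial or entirely stabilized by a subtorus, such as simply connected surfaces of general type or general hypersurfaces of large degree in $\Pp^n$. Here there is no abelian variety to exploit, Mordell--Lang yields nothing, and one is thrown back entirely on the unproven Vojta inequality of the first step. No present technique extracts a height bound from positivity of $K_X$ alone, and even the complex-analytic shadow — the Green--Griffiths--Lang statement that entire curves are non-Zariski-dense — is unknown already for a general quintic surface in $\Pp^3$. My honest assessment is that the decisive obstacle is the global height estimate in the absence of an abelian target: the scheme above genuinely reduces the conjecture but cannot close it without a new input, such as a full proof of Vojta's conjecture.
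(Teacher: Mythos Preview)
The statement you are addressing is Conjecture~\ref{conj1}, not a theorem: the paper merely records Lang's conjecture as a standing hypothesis and offers no proof whatsoever, because none exists. Your proposal is therefore not comparable to anything in the paper --- there is no ``paper's own proof'' to compare against. What you have written is a correct and well-informed survey of the known reductions (Vojta $\Rightarrow$ Lang; the Albanese/Mordell--Lang argument for varieties with nontrivial abelian quotient) together with an honest acknowledgment that the simply-connected case is wide open. That is accurate, but it is not a proof, and you say so yourself in your final paragraph.

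The only genuine issue is one of framing: a conjecture stated as background does not call for a proof attempt. In the context of this paper the appropriate response is simply to note that Conjecture~\ref{conj1} is assumed, not proved, and that the paper's contribution lies in deriving consequences from it (Theorems~\ref{main}, \ref{main2}) rather than in establishing it.
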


Caporaso, Harris, and Mazur showed that Lang's conjecture has profound uniformity implications for rational points on curves \cite{Caporaso1997}, based on above conjetures.

\begin{thm}[Uniformity I, \cite{Caporaso1997}]
	\label{UB}
	The weak version of Lang's conjecture implies that for every number field $k$ and integer $g \geq 2$, there exists a number $N(k,g)$ such that no curve of genus $g$ defined over $k$ has more than $N(k,g)$ rational points.
\end{thm}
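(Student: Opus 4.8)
The plan is to derive the statement by combining Faltings' theorem (Theorem~\ref{faltings}) with the weak Lang conjecture (Conjecture~\ref{conj1}(a)), applied not to curves directly but to high fibre powers of families of curves, organized by a Noetherian induction on the base. Fix $g\geq 2$. Using the standard passage from the coarse space $\mathcal{M}_g$ to a fine moduli space of genus-$g$ curves with level-$\ell$ structure and its universal curve (which alters the count of rational points only by a factor bounded in terms of $g$ and $\ell$), it suffices to prove: for every quasi-projective $k$-variety $S$ carrying a family $f\colon\mathcal{X}\to S$ whose generic fibre is a smooth curve of genus $g$, the quantity
\[
 N(k,\mathcal{X}/S)\;:=\;\sup\bigl\{\,\#\mathcal{X}_s(k)\;:\;s\in S(k),\ \mathcal{X}_s\ \text{smooth}\,\bigr\}
\]
is finite. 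I argue by induction, assuming this for all families over proper Zariski-closed subvarieties of $S$ and over varieties of strictly smaller dimension; the base case $\dim S=0$ is immediate from Theorem~\ref{faltings}, since then $f$ is a finite disjoint union of curves.

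For the inductive step I first shrink $S$ (absorbing the discarded locus into the inductive hypothesis) so that $f$ is smooth with geometrically connected fibres and $S$ is irreducible, and reduce to the case that the moduli map $\mu\colon S\to\mathcal{M}_g$ is quasi-finite: on a locus where $\mu$ has positive-dimensional fibres the family is isotrivial, and a standard consequence of Faltings' theorem on subvarieties of abelian varieties (uniformly bounding $\#C(L)$ for a fixed curve $C$ of genus $\geq 2$ and $[L:k]$ bounded) bounds the fibres there, so such loci are harmless. With $\mu$ quasi-finite, the crucial input is the \emph{Correlation Theorem} of Caporaso--Harris--Mazur \cite{Caporaso1997}: there is an integer $n$ (depending only on $g$) such that the $n$-fold fibre power $\mathcal{X}^{(n)}:=\mathcal{X}\times_S\cdots\times_S\mathcal{X}$ admits a dominant, generically finite rational map to a smooth projective variety $W$ of general type with $\dim W=\dim\mathcal{X}^{(n)}$. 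Granting this, weak Lang for $W$ makes $W(k)$ non-Zariski-dense; pulling this back through the generically finite map and adjoining its indeterminacy locus shows that $\mathcal{X}^{(n)}(k)$ is contained in a proper closed subset $Z\subsetneq\mathcal{X}^{(n)}$, the same $Z$ for every fibre.

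The remainder is an elementary fibre-dimension descent from $n$ down to $1$. Fix $s\in S(k)$ with $d:=\#\mathcal{X}_s(k)\geq n$; then every ordered tuple of $n$ distinct rational points of $\mathcal{X}_s$ lies in $Z$. Run the projections $\pi\colon\mathcal{X}^{(m)}\to\mathcal{X}^{(m-1)}$ that forget the last point, starting from $Z_n:=Z$. At level $m$: if $\overline{\pi(Z_m)}$ is proper in $\mathcal{X}^{(m-1)}$, set $Z_{m-1}:=\overline{\pi(Z_m)}$ and note that every $(m-1)$-tuple of distinct rational points of any fibre with $\geq n$ points still lies in $Z_{m-1}$. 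Otherwise $Z_m$ has a component dominating $\mathcal{X}^{(m-1)}$, which by the bound $\dim Z_m\leq\dim\mathcal{X}^{(m)}-1=\dim\mathcal{X}^{(m-1)}$ is generically finite over it; outside a proper closed ``bad locus'' $B_{m-1}\subsetneq\mathcal{X}^{(m-1)}$ every fibre of $Z_m\to\mathcal{X}^{(m-1)}$ is a finite set of cardinality at most a fixed $e_m$. Then for any fibre $\mathcal{X}_s$ possessing $m-1$ distinct rational points $(p_1,\dots,p_{m-1})\notin B_{m-1}$, every remaining rational point $q$ forces $(p_1,\dots,p_{m-1},q)\in Z_m$, so $d-(m-1)\leq e_m$; while for fibres with no such tuple all $(m-1)$-tuples of distinct rational points lie in $B_{m-1}$, so we set $Z_{m-1}:=B_{m-1}$ and continue. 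At $m=1$ we have a proper closed $Z_1\subsetneq\mathcal{X}$ containing $\mathcal{X}_s(k)$ for every fibre for which the descent has not already produced a bound: if $Z_1$ dominates $S$ it is generically finite over $S$ and so bounds $\#\mathcal{X}_s(k)$ for $s$ outside a proper closed subset of $S$, while if it does not the relevant $s$ lie in a proper closed subvariety of $S$; either way the inductive hypothesis on $\dim S$ finishes the job. Since every branch yields a uniform bound or decreases $(\dim S,n)$ lexicographically, the induction terminates and $N(k,\mathcal{X}/S)<\infty$.

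The principal obstacle is the Correlation Theorem: showing that sufficiently high fibre powers of a non-isotrivial family of curves of genus $\geq 2$ dominate a variety of general type of the same dimension. This reduces to proving that the relative canonical bundle of $\mathcal{X}^{(n)}/S$ becomes big for $n\gg 0$, which one establishes by induction on the number of fibre factors using the semipositivity of $\omega_{\mathcal{X}/S}$, Arakelov/Parshin-type positivity and boundedness estimates, and subadditivity of Kodaira dimension, all while controlling the exceptional vertical loci (diagonals and isotrivial subfamilies) that must be removed. Compared with this, the moduli-theoretic preliminaries, the isotrivial-family input, and the fibre-dimension bookkeeping above are routine.
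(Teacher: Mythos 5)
The paper itself offers no proof of this statement---it quotes it from Caporaso--Harris--Mazur---and your architecture (Correlation Theorem plus weak Lang applied to the general-type target, then a fibre-power descent with Noetherian induction on the base) is precisely the CHM argument. But as written your reduction has a genuine gap: the disposal of the isotrivial locus. You discard the locus where the moduli map has positive-dimensional fibres by asserting that Faltings' theorem on subvarieties of abelian varieties ``uniformly bounds $\#C(L)$ for a fixed curve $C$ of genus $\geq 2$ and $[L:k]$ bounded.'' That is not a consequence of Faltings. For a fixed hyperelliptic curve $C/\Q$ and quadratic fields $L=\Q(\sqrt{D})$, the set $C(L)$ contains the image of $C_D(\Q)$ for the quadratic twist $C_D$, so a bound on $\#C(L)$ uniform in $L$ of bounded degree is equivalent to uniform boundedness of $\#C_D(\Q)$ across the twist family---itself an instance of the uniform Mordell problem, open unconditionally and exactly the kind of statement that in the CHM framework requires Lang's conjecture. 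Likewise, the fibres over $k$-points of your isotrivial locus are mutually non-isomorphic twists of one $\bar{k}$-curve, so Faltings gives finiteness fibre by fibre but no uniform bound there.

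The gap is avoidable, and the fix shows that your reduction to a quasi-finite moduli map (and the strengthened ``generically finite onto a general-type variety of the same dimension'' form of correlation) was unnecessary: the Correlation Theorem as CHM prove it, and as the paper records it in Theorem \ref{corel}, gives only a dominant rational map from a sufficiently high fibre power to a variety of general type, but it applies to \emph{every} family with smooth genus $\geq 2$ generic fibre, isotrivial families included (for $C\times B$ one simply projects to $C^n$). Your descent never uses generic finiteness---it only needs that $\mathcal{X}^{(n)}(k)$ lies in a single proper closed subset $Z$, which already follows from dominance together with weak Lang applied to the target---so the argument should be run uniformly for all families, with no case division and no appeal to uniformity over bounded-degree extensions. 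A secondary, smaller defect: rigidifying via level-$\ell$ structures forces a field extension $L/k$ depending on the curve, while your inductive statement bounds points only over the fixed base field, so infinitely many fields $L$ of bounded degree intervene; the standard remedy is to parameterize by the Hilbert scheme of tricanonically embedded genus-$g$ curves, over which every genus-$g$ curve over $k$ occurs as the fibre at a $k$-rational point. With these repairs your proposal becomes the CHM proof that the paper cites.
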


\begin{thm}[Uniformity II, \cite{Caporaso1997}]
	\label{UGB}
	The strong version of Lang's conjecture implies that for any integer $g \geq 2$, there exists a number $N(g)$ such that for every number field $k$, there are only finitely many curves $C$ of genus $g$ over $k$ with $\#C(k) > N(g)$.
\end{thm}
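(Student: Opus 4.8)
The plan is to follow the method of Caporaso, Harris, and Mazur \cite{Caporaso1997}, which combines a purely geometric theorem about fibered powers of families of curves with the arithmetic input of the strong Lang conjecture, the two being glued together by a Noetherian induction on a moduli space. First I would fix $g \geq 2$ and set up moduli: let $\mathcal{M}_g$ be the moduli space of smooth projective genus-$g$ curves, pass to a finite \'etale cover $B \to \mathcal{M}_g$ carrying a universal family $f\colon X \to B$ (for instance by adding a level structure), choose smooth projective models, and reduce---since $\mathcal{M}_g$ is dominated by finitely many such covers---to bounding $\#C(k)$ for curves whose moduli point lies in the image of $B$. For $n \geq 1$ let $X^{(n)}_B = X \times_B \cdots \times_B X$ be the $n$-th fibered power, whose fiber over $[C]$ is $C^n$. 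The elementary point driving everything is that a curve $C$ over a number field $K$ with at least $n$ rational points yields, by picking $n$ ordered distinct points, a $K$-rational point of $X^{(n)}_B$ lying in the fiber over $[C]$; and the base case of any induction is controlled by Faltings' theorem (Theorem \ref{faltings}).

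The geometric heart, which I would quote as a black box, is the \emph{Correlation Theorem} of \cite{Caporaso1997}: for any family of stable curves of genus $g \geq 2$ with smooth generic fiber over a smooth projective base, there is an integer $n_1$ such that for every $n \geq n_1$ the fibered power $X^{(n)}_B$ admits a dominant rational map onto a positive-dimensional variety $W_n$ of general type. Its proof exploits the positivity of the relative dualizing sheaf $\omega_{X/B}$, via Arakelov- and Bogomolov-type inequalities, to produce enough pluricanonical sections on the fibered powers once $n$ is large; this is the one genuinely difficult and \emph{unconditional} ingredient, and I would not attempt to reprove it.

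With this in hand I would invoke Conjecture \ref{conj1}(b): applying the strong Lang conjecture to each $W_n$ (for $n \geq n_1$) and pulling the resulting exceptional subsets back along the dominant rational maps yields, for every $n$, a proper closed $Z_n \subsetneq X^{(n)}_B$ outside of which the $K$-rational points of $X^{(n)}_B$ are controlled uniformly in the number field $K$. The crux is then a Noetherian induction on $B$: a curve $C/K$ with more than a prescribed number of rational points produces so many $K$-points in the fibers of the $X^{(n)}_B$ that, by the uniform control outside $Z_n$, its moduli point $[C]$ is forced into a fixed proper closed subset $B_1 \subsetneq B$; restricting the universal family to the strata of $B_1$ gives finitely many families of smooth genus-$g$ curves over bases of strictly smaller dimension, to which the whole argument reapplies. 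Since the dimension strictly decreases, the recursion terminates after finitely many stages, and the maximum of the finitely many bounds produced is the desired $N(g)$; it depends on $g$ alone precisely because the strong form of Lang's conjecture (Conjecture \ref{conj1}(b)), unlike the weak form---which only yields the $k$-dependent bound of Theorem \ref{UB}---provides an exceptional locus valid simultaneously for all number fields. The main obstacle, beyond invoking the Correlation Theorem, is making this descending induction precise: one must control the $K$-points lying in the fibers of $X^{(n)}_B$ over the finitely many ``bad'' points of $W_n$ (these fibers being themselves positive-dimensional), verify that the exceptional loci are again finite unions of families of smooth genus-$g$ curves to which the theorem reapplies, and check that passing to level covers, fibered powers, and resolutions of indeterminacy does not disturb the general-type conclusion.
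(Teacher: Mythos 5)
The paper does not prove this statement; it is quoted verbatim from Caporaso--Harris--Mazur \cite{Caporaso1997} as a black box (alongside the Correlation Theorem, Theorem \ref{corel}, which is the geometric engine of their proof). Your proposal is a faithful reconstruction of the CHM argument --- fibered powers, the Correlation Theorem as the unconditional input, strong Lang applied to the general-type targets, and Noetherian induction on the base --- and it correctly identifies why the strong (rather than weak) form is what removes the dependence on $k$. One point deserves tightening: the strong Lang conjecture gives a proper closed $Z \subset W_n$, defined once and for all, such that $(W_n \setminus Z)(K)$ is \emph{finite for each} $K$ --- it does not bound that finite set uniformly in $K$, so your phrase ``controlled uniformly in the number field $K$'' overstates what is available. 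The correct bookkeeping is that the bound $N(g)$ comes entirely from the geometric stratification produced by the Noetherian induction (the closed sets $Z_n$ and their preimages are independent of $K$), while the curves whose fibered-power points escape these closed sets at some stage account precisely for the ``finitely many exceptional curves over each $k$'' clause in the statement; these two outputs of the argument should not be merged into a single uniform bound. With that separation made explicit, your sketch matches the proof the paper is implicitly relying on.
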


A key tool in proving these uniformity results is the following geometric theorem, which produces varieties of general type from families of curves.

\begin{thm}[Correlation Theorem, \cite{Caporaso1997}]
	\label{corel}
	Let $f: \Cc \rightarrow \mathcal{B}$ be a proper morphism of integral varieties over $k$ whose generic fiber is a smooth curve of genus $\geq 2$. For any integer $m \geq 1$, let $\Cc_\mathcal{B}^m$ be the $m$-th fiber product of $\Cc$ over $\mathcal{B}$. Then, for $m$ sufficiently large, $\Cc_\mathcal{B}^m$ admits a dominant rational map to a variety of general type.
\end{thm}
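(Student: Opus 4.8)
The plan is to follow the argument of Caporaso, Harris, and Mazur. Since being of general type and the existence of a dominant rational map are geometric properties, I would work over $\kk$ throughout, a statement over a number field then following up to replacing $k$ by a finite extension. First I would replace $f\colon\Cc\to\mathcal{B}$ by a convenient model: after shrinking $\mathcal{B}$ and resolving, one may assume $\mathcal{B}$ is smooth and $f$ is smooth with every fibre a smooth curve of genus $g\geq 2$, which changes neither $\mathcal{B}$ nor $\Cc_\mathcal{B}^m$ birationally. Then $\omega:=\omega_{\Cc/\mathcal{B}}$ is relatively ample, $\Cc_\mathcal{B}^m$ is smooth with $\omega_{\Cc_\mathcal{B}^m/\mathcal{B}}\cong\boxtimes_{i=1}^m\mathrm{pr}_i^{*}\omega$, and the fibres of $f^m\colon\Cc_\mathcal{B}^m\to\mathcal{B}$ are the varieties $(\Cc_b)^m$, which are of general type because finite products of general-type varieties are. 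Writing $v$ for the variation of the family (the dimension of the image of the moduli map $\mu\colon\mathcal{B}\dashrightarrow\mathcal{M}_g$), I would then split into three cases according to the value of $v$.

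In the isotrivial case $v=0$, the generic fibre $\Cc_\eta$ is a twist over $k(\mathcal{B})$ of a fixed curve $C_0/\kk$ of genus $\geq 2$; as $\aut(C_0)$ is finite, the twist is split by a finite Galois cover $\mathcal{B}'\to\mathcal{B}$ with group $\Gamma\hookrightarrow\aut(C_0)$, so $\Cc_\mathcal{B}^m$ is birational to a quotient $(C_0^m\times\mathcal{B}')/\Gamma$ for the action of $\Gamma$ that is diagonal on $C_0^m$. Projection onto $C_0^m/\Gamma$ is dominant, and for $m\geq 2$ the variety $C_0^m/\Gamma$ is of general type: any non-trivial $\gamma\in\Gamma$ acts on $C_0^m$ with fixed locus of codimension $\geq m\geq 2$, so $C_0^m\to C_0^m/\Gamma$ is \'etale in codimension one and hence $\kappa(C_0^m/\Gamma)=\kappa(C_0^m)=m\dim C_0=\dim(C_0^m/\Gamma)$. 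Thus any $m\geq 2$ works in this case.

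The heart of the matter is the maximal-variation case $v=\dim\mathcal{B}$, in which I would show that $\Cc_\mathcal{B}^m$ is itself of general type for $m\gg 0$. By the K\"unneth formula, $(f^m)_{*}\,\omega_{\Cc_\mathcal{B}^m/\mathcal{B}}^{\otimes N}=\mathcal{E}_N^{\otimes m}$ where $\mathcal{E}_N:=f_{*}\omega^{\otimes N}$. Viehweg's positivity theorems for direct images of relative pluricanonical sheaves give that each $\mathcal{E}_N$ is weakly positive and, crucially, that $\det\mathcal{E}_N$ is big for $N\gg 0$ exactly because the variation is maximal. As $\det(\mathcal{E}_N^{\otimes m})$ is a tensor power of $\det\mathcal{E}_N$ whose exponent grows with $m$, it follows that for $m\gg 0$ the sheaf $\mathcal{E}_N^{\otimes m}\otimes\omega_{\mathcal{B}}^{\otimes N}=(f^m)_{*}\,\omega_{\Cc_\mathcal{B}^m}^{\otimes N}$ is big; together with the relative bigness of $\omega_{\Cc_\mathcal{B}^m/\mathcal{B}}$ (the fibres being of general type), Viehweg's fibre-product technique then shows that $\omega_{\Cc_\mathcal{B}^m}$ is big, i.e.\ that $\Cc_\mathcal{B}^m$ is of general type. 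The same argument works for any smooth family of canonically polarised varieties, not only of curves.

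Finally, in the intermediate case $1\leq v<\dim\mathcal{B}$, I would resolve $\mu$ to a morphism $q\colon\mathcal{B}\to\mathcal{B}_0$ with $\dim\mathcal{B}_0=v$, so that $\Cc$ is isotrivial along the fibres of $q$ while the induced family $\Cc_0\to\mathcal{B}_0$ has maximal variation. Restricting $\Cc_\mathcal{B}^m$ over the generic fibre of $q$ and applying the isotrivial analysis fibrewise over $\mathcal{B}_0$, one spreads out a dominant $\mathcal{B}_0$-rational map from $\Cc_\mathcal{B}^m$ onto a variety $\mathcal{W}_m$ that is geometrically birational to a finite quotient $(\Cc_0)_{\mathcal{B}_0}^m/\Gamma$ of the $m$-fold fibre power of $\Cc_0$, the group $\Gamma$ acting with fixed locus of codimension $\geq m$. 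By the maximal-variation case applied to $\Cc_0$, the space $(\Cc_0)_{\mathcal{B}_0}^m$ is of general type for $m\gg 0$, and the quotient by $\Gamma$ inherits this (for $m\geq 2$, just as in the isotrivial case); hence $\mathcal{W}_m$ is of general type and $\Cc_\mathcal{B}^m$ dominates it. The main obstacle throughout is the maximal-variation case: both the bigness of $\det f_{*}\omega_{\Cc/\mathcal{B}}^{\otimes N}$ and the step from relative bigness plus bigness on the base to absolute bigness of the canonical class rely on the full strength of Viehweg's positivity machinery for Hodge-type bundles; granting that input, the intermediate case is a delicate but essentially routine spreading-out argument and the isotrivial case is elementary.
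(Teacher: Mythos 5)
The paper offers no proof of this statement: it is quoted as a black box from Caporaso--Harris--Mazur \cite{Caporaso1997}, so there is no internal argument to compare yours against. Your outline does follow the CHM strategy --- trichotomy on the variation of the family, the finite-quotient trick in the isotrivial case (your codimension-$\geq m$ fixed-locus computation and the \'etale-in-codimension-one argument are fine), Viehweg's positivity of $\det f_{*}\omega^{\otimes N}$ under maximal variation, and spreading out over the image of the moduli map in the intermediate case.

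There is, however, a genuine gap in the maximal-variation case, which is where all the substance lies. Being of general type is a statement about a smooth \emph{proper} model, but your opening reduction throws properness away: after shrinking $\mathcal{B}$ so that $f$ is smooth, the fiber power $\Cc^{m}_{\mathcal{B}}$ is no longer proper, and bigness of its canonical sheaf on that open model says nothing by itself about any compactification. The crux of \cite{Caporaso1997} is exactly this point: they perform stable reduction (after a finite base change and blow-ups), work with the proper fiber powers of the resulting family of nodal stable curves, prove that these fiber powers have only canonical (\'etale-locally product-of-nodes) singularities so that pluricanonical sections lift to a resolution, and only then convert Viehweg-type positivity of the Hodge bundles into bigness of the canonical class of a desingularization; one must also track the base change and the choice of dominant component so that the general-type variety produced is still dominated by the original $\Cc^{m}_{\mathcal{B}}$. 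Your appeal to ``Viehweg's fibre-product technique'' on the non-proper smooth locus skips precisely this compactification-and-singularities step; with that supplied, the rest of your sketch matches the published argument.
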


Finally, a recent breakthrough result of Dimitrov, Gao, and Habegger provides a crucial link between the rank of a Jacobian and the number of rational points on the curve itself.

\begin{thm}[\cite{Dimitrov2021}]
	\label{dgh}
	Let $g \geq 1$ and $d \geq 1$ be integers. There exists a constant $c=c(g,d)$ such that if $C$ is a smooth curve of genus $g$ defined over a number field $k$ with $[k:\Q] \leq d$, then
	\[ \#C(k) \leq c^{1+r(J_C(k))}. \]
\end{thm}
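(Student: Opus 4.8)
The plan is to realize $C$ inside its Jacobian, count $k$-rational points according to the size of their N\'eron--Tate height, and combine two inputs: an explicit quantitative form of Vojta's inequality, which governs the points of large height, and a uniform height lower bound --- the genuinely new ingredient of \cite{Dimitrov2021} --- which governs the points of small height. If $C(k)=\emptyset$ there is nothing to prove, so fix $P_0\in C(k)$ and let $\iota\colon C\hookrightarrow J_C$ be the Abel--Jacobi embedding with base point $P_0$; it is injective on $\kk$-points because $g\geq 1$, and the class of $\iota(C)$ in $J_C$ has degree $g$ for the theta polarization $\Theta$. Let $\hat h$ be the associated N\'eron--Tate height; it descends to a positive definite quadratic form on $\bigl(J_C(k)/J_C(k)_{\mathrm{tors}}\bigr)\otimes\mathbb{R}\cong\mathbb{R}^{r}$, where $r=r(J_C(k))$, and we write $v_P\in\mathbb{R}^r$ for the vector attached to $P\in C(k)$. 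By a uniform form of the Manin--Mumford conjecture (K\"uhne), the fibers of the map $P\mapsto v_P$ on $C(k)$ have cardinality at most $c_0(g,d)$, so it suffices to bound the number of distinct vectors $v_P$, at the cost of the multiplicative constant $c_0(g,d)$.

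For the points of large height one applies Vojta's inequality to the embedded curve $\iota(C)\subset J_C$, in the explicit quantitative form of R\'emond. Since the $\Theta$-degree of $\iota(C)$ depends only on $g$, this yields a constant $c_1(g)$ and a threshold $H_0(C)$, a priori depending on $C$, such that at most $c_1(g)^{1+r}$ points $P\in C(k)$ satisfy $\hat h(\iota(P))\geq H_0(C)$. Mechanically this rests on a repulsion principle on the sphere: there exist $\theta=\theta(g)\in(0,1)$ and $c_2(g)$ so that the vectors $v_P$ of the large points lying in any spherical cap of $S^{r-1}$ of angular radius $\theta$ number at most $c_2(g)$ (this is the gap-principle consequence of Vojta's inequality), while $S^{r-1}$ is covered by at most $c_3(g)^r$ such caps; multiplying gives the bound.

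The content of \cite{Dimitrov2021} is that this threshold can be taken \emph{uniform in $C$ and $k$}. Over the moduli space $\mathcal{M}_g$, with its universal curve $\mathfrak{C}$ and universal Jacobian, one studies the Betti (real-analytic uniformization) map on the fibered powers $\mathfrak{C}^{[N]}$; combining the mixed Ax--Schanuel theorem for the universal abelian variety, o-minimal point counting in the Pila--Zannier style, and Gao's bigness results for the Betti form, one obtains a constant $H_0=H_0(g,d)>0$ together with a Zariski-closed subset $\mathcal{E}\subset\mathfrak{C}$ whose fiber $\mathcal{E}_C$ over each point of $\mathcal{M}_g$ is finite of cardinality at most $c_4(g,d)$, such that for every genus-$g$ curve $C$ over every number field $k$ with $[k:\Q]\leq d$, every $P\in C(k)$ with $\hat h(\iota(P))<H_0$ lies in $\mathcal{E}_C$. (Producing $H_0$ and the uniform bound on $\mathcal{E}_C$ in general requires an induction on the dimension of the fibered powers.) Hence the points of small height number at most $c_4(g,d)$.

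Taking $H=H_0(g,d)$ in the large-height bound and summing, we obtain $\#C(k)\leq c_0(g,d)\bigl(c_1(g)^{1+r}+c_4(g,d)\bigr)$, which is at most $c(g,d)^{1+r(J_C(k))}$ for a suitable constant $c=c(g,d)$; this is the claim. The main obstacle is the small-height step: the uniform height lower bound, and above all the control of the exceptional locus $\mathcal{E}$ \emph{uniformly over $\mathcal{M}_g$ and independently of the number field $k$}. This is precisely where the functional-transcendence and o-minimality apparatus, together with the relative (geometric) Bogomolov circle of ideas, becomes indispensable, and it is the technical core of \cite{Dimitrov2021}; by comparison, the large-height count is a by-now-standard refinement of the Vojta--Faltings method.
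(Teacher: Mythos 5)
The paper does not actually prove this statement: it is imported from \cite{Dimitrov2021} as a black box, so there is no internal argument to measure yours against. As a reconstruction of the Dimitrov--Gao--Habegger proof, your sketch has the right architecture: large-height points are counted by the Mumford--Vojta repulsion principle in R\'emond's explicit form together with a covering of the unit sphere of the Mordell--Weil lattice by $c(g)^{r}$ caps, and the genuinely new ingredient is the uniform height threshold $H_0(g,d)$, obtained from the Betti map on fibered powers of the universal curve via mixed Ax--Schanuel, o-minimal point counting, and Gao's nondegeneracy results. Two small remarks: collapsing the torsion fibers of $P\mapsto v_P$ by appeal to uniform Manin--Mumford is really how the later Gao--Ge--K\"uhne refinement proceeds, whereas \cite{Dimitrov2021} absorbs those points into the small-height count (they have N\'eron--Tate height zero, hence lie below any threshold); and in \cite{Dimitrov2021} the small-height analysis splits further according to whether the Faltings height of $C$ is large or bounded, the bounded case being treated by a separate Northcott-type argument. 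Neither point affects the validity of the outline.

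There is, however, one genuine defect, inherited from the statement as transcribed in the paper: the hypothesis must be $g\ge 2$, not $g\ge 1$. For $g=1$ the Abel--Jacobi map identifies $C$ with $J_C$, so $\#C(k)=\#J_C(k)$ is infinite whenever $r(J_C(k))\ge 1$ and the claimed inequality is simply false; correspondingly, Vojta's inequality and the resulting cap repulsion are vacuous precisely when $\iota(C)$ is a coset of an abelian subvariety, which is exactly this case. Your argument therefore establishes the theorem only for $g\ge 2$, which is what \cite{Dimitrov2021} actually asserts; this is worth flagging, since the surrounding paper goes on to apply the statement to genus-one curves, where some substitute (e.g.\ bounds for elliptic curves in terms of rank and height of generators) would be needed.
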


This result, proven using the technical machinery of Vojta's method and height inequalities, makes the Uniform Rank Boundedness Conjecture and the Uniform Point Boundedness Conjecture logically equivalent. 
%It serves as the indispensable bridge connecting the output of the geometric method (a point bound) to the desired conclusion (a rank bound). 

\subsection{Gonality of Algebraic Curves}
The \textit{$k$-gonality}, $\gamma_k(C)$, of a smooth projective curve $C$ over a field $k$ is the minimal degree of a dominant rational map from $C$ to $\Pp^1$ defined over $k$. The following result of Lazarsfeld gives a lower bound on the gonality of complete intersection curves.

\begin{thm}[Lazarsfeld, \cite{Lazarsfeld1997}]
	\label{Lazarsfeld1997}
	Let $C \subset \Pp^n$ be a smooth complete intersection curve over a number field $k$, defined by homogeneous polynomials of degrees $2 \leq d_1 \leq \cdots \leq d_{n-1}$. Then
	\[ \gamma_k(C) \geq (d_1-1)d_2 \cdots d_{n-1}. \]
\end{thm}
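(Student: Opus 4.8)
The plan is to recover this classical bound from the geometry of $C$ as a divisor on a complete‑intersection surface; since any dominant map $C\to\Pp^1$ defined over $k$ is in particular defined over $\kk$, it suffices to prove the a priori stronger inequality $\gamma_{\kk}(C)\geq (d_1-1)d_2\cdots d_{n-1}$. First I would reduce to a surface: write $C=V(F_1,\dots,F_{n-1})\subset\Pp^n$ with $\deg F_i=d_i$, set aside the form $F_1$ of least degree, and let $S\subset\Pp^n$ be a sufficiently general complete‑intersection surface of type $(d_2,\dots,d_{n-1})$ containing $C$; in all but a few small cases (handled separately at the end) one may take $S$ smooth by Bertini's theorem, the base locus of the relevant linear systems being the smooth curve $C$. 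Then $C\in|d_1H|$ on $S$, where $H=\OO_S(1)$ satisfies $H^2=d_2\cdots d_{n-1}=:h$ and $C\cdot H=d_1h=\deg C$, so the target inequality becomes $\gamma_{\kk}(C)\geq (d_1-1)h$.

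Next I would set up the Lazarsfeld–Mukai vector bundle. Assuming for contradiction that $e:=\gamma_{\kk}(C)<(d_1-1)h$, fix a base‑point‑free pencil $(A,V)$ on $C$ with $\deg A=e$ and $\dim_{\kk}V=2$. Pushing $A$ forward along $i\colon C\hookrightarrow S$ and dualizing the exact sequence $0\to E^\vee\to V\otimes\OO_S\to i_*A\to 0$ yields a rank‑$2$ bundle $E$ on $S$ with $c_1(E)=[C]=d_1H$ and $c_2(E)=e$; moreover the images $s_1,s_2$ of a basis of $V^\vee$ are global sections of $E$ whose wedge $s_1\wedge s_2\in H^0(S,\det E)=H^0(S,\OO_S(C))$ is a nonzero section with divisor of zeros exactly $C$ (the two sections form a frame of $E$ away from $C$).

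The crux is then a stability dichotomy for $E$ with respect to $H$. If $E$ is $H$‑semistable, Bogomolov's inequality gives $c_1(E)^2\leq 4c_2(E)$, i.e.\ $d_1^2h\leq 4e$, hence $e\geq d_1^2h/4\geq (d_1-1)h$ since $(d_1-2)^2\geq 0$ --- contradicting the choice of $e$. If instead $E$ is not $H$‑semistable, let $\OO_S(D)\hookrightarrow E$ be the maximal destabilizing (saturated) sub‑line‑bundle, so that $E/\OO_S(D)\cong I_Z\otimes\OO_S(C-D)$ with $Z$ finite, $D\cdot H>\tfrac12 C\cdot H=\tfrac{d_1h}{2}$, and therefore $e=c_2(E)=D\cdot(C-D)+\ell(Z)\geq D\cdot(C-D)$. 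Here $C-D$ is forced to be effective: otherwise $h^0(S,I_Z\otimes\OO_S(C-D))=0$, so $s_1$ and $s_2$ both factor through the line bundle $\OO_S(D)$ and $s_1\wedge s_2=0$, a contradiction; the residual possibility $D=C$ with $Z=\emptyset$ would give $e=c_2(E)=0$, impossible for a pencil. Hence also $D\cdot H<C\cdot H=d_1h$. When $\mathrm{Pic}(S)=\Z H$ --- the generic situation by the Noether–Lefschetz theorem --- this means $D=xH$ with $\tfrac{d_1}{2}<x\leq d_1-1$, whence $e\geq D\cdot(C-D)=x(d_1-x)h\geq (d_1-1)h$, again a contradiction. (For $d_1=2$ the admissible range for $x$ is empty, so $E$ is automatically semistable and the first case applies.)

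I expect the main obstacle to be precisely the case $\mathrm{Pic}(S)\neq\Z H$: there the Hodge index theorem only yields $D^2\leq (D\cdot H)^2/h$, which is too lossy to reproduce the sharp constant. I would deal with this either by invoking a Noether–Lefschetz theorem with incidence conditions, ensuring that a general surface through $C$ has Picard group $\Z H$ unless the $d_i$ are all small, or by disposing of the finitely many exceptional complete‑intersection surface types directly (the quadric and cubic surfaces in $\Pp^3$ and the $(2,2)$ del Pezzo in $\Pp^4$), in each of which $C$ has small genus and $\gamma_{\kk}(C)$ can be computed by hand --- via the ruling, conic‑bundle, or Brill–Noether structure --- to meet the bound. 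The remaining bookkeeping (tracking base loci in the Bertini reduction, verifying that $C$ avoids $\mathrm{Sing}(S)$, and computing $C^2$ through the Lazarsfeld–Mukai construction) is routine.
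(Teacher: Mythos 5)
First, a point of order: the paper does not prove this statement at all --- it is quoted from Lazarsfeld's lecture notes and used as a black box (its only role is the gonality bound in Proposition \ref{p1}) --- so there is no internal proof to compare yours against, and your attempt has to be judged on its own. Much of your skeleton is sound: the reduction to $\kk$-gonality, the identification of $C$ as a divisor in $|d_1H|$ on a complete-intersection surface $S$ of multidegree $(d_2,\dots,d_{n-1})$ with $H^2=h=d_2\cdots d_{n-1}$, the Lazarsfeld--Mukai bundle $E$ with $c_1(E)=d_1H$, $c_2(E)=\deg A$ and two sections whose wedge cuts out $C$, the Bogomolov branch $d_1^2h\le 4c_2(E)$ giving $c_2(E)\ge(d_1-1)h$, and the destabilization analysis \emph{when} $\mathrm{Pic}(S)=\Z H$ are all correct.

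The gap is exactly where you locate it, and neither proposed repair closes it. You cannot enlarge the degree of $S$ (the relation $[C]=d_1H$ and $\deg C=d_1h$ pin $S$ to multidegree exactly $(d_2,\dots,d_{n-1})$), and at these \emph{fixed} degrees the Noether--Lefschetz theorems with base conditions (Lopez, Brevik--Nollet) do not apply: they require the ambient degree to be large relative to the prescribed curve, whereas here $C$ has degree $d_1h$ on a surface of degree only $h$. Your list of exceptional surfaces is also not exhaustive: every multidegree with $\sum_{i\ge 2}d_i\le n+1$ yields a del Pezzo or $K3$ surface, and even for complete-intersection surfaces of general type the Noether--Lefschetz locus is a countable dense union, so nothing guarantees that \emph{some} smooth member of the (rather small) linear system of such surfaces through the given $C$ has Picard number one. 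Without that, the unstable case collapses: the Hodge index theorem gives only $D\cdot(C-D)\ge \frac{t(d_1h-t)}{h}$ with $t=D\cdot H$, which tends to $0$ as $t\to d_1h$, exactly as you note. So as written the argument proves the bound only for those $C$ admitting a surface $S$ with $\mathrm{Pic}(S)=\Z H$, which is not established. The proof in the source the paper cites avoids all of this: a general fiber $\Gamma$ of the gonality pencil fails to impose independent conditions on $|K_C|=|\OO_C(\sum_i d_i-n-1)|$, hence (by projective normality of complete intersections) satisfies the Cayley--Bacharach condition for hypersurfaces of degree $\sum_i d_i-n-1$ in $\Pp^n$, and an elementary counting lemma for such configurations then forces $\#\Gamma\ge(d_1-1)d_2\cdots d_{n-1}$ --- no auxiliary surface, no stability, no Picard-group input. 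I would either switch to that route or restrict your claim to the cases where $\mathrm{Pic}(S)=\Z H$ can actually be certified.
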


\subsection{Trivial Points on Towers of Curves}
Our argument relies on the behavior of rational points in a sequence of covering maps. This is formalized by the theory of towers of curves, developed by Xarles \cite{Xarles2013}.

\begin{defn}
	A \textit{tower of curves} over a field $k$ is a pair $\Cb = (\{C_n\}_{n \geq 1}, \{\phi_n\}_{n \geq 1})$, where each $C_n$ is a smooth, projective, geometrically connected curve over $k$, and each $\phi_{n+1}: C_{n+1} \to C_n$ is a non-constant morphism defined over $k$. For $m > n \geq 1$, the composition map is denoted $\phi_{m,n} = \phi_{n+1} \circ \cdots \circ \phi_m$. The gonality of the tower, $\gamma_k(\Cb)$, is the limit $\lim_{n \to \infty} \gamma_k(C_n)$.
\end{defn}

A key concept is that of a "trivial" point, one which can be lifted indefinitely up the tower.

\begin{defn}[\cite{Xarles2013}]
	Let $\Cb$ be a tower of curves over $k$.
	\begin{enumerate}
		\item The set of \textit{$k$-trivial points of $\Cb$ at level $n$} is
		\[ \Cb(k)_n := \{ P \in C_n(k) \mid \forall m \ge n, \exists P_m \in C_m(k) \text{ such that } \phi_{m,n}(P_m) = P \}. \]
		\item For an integer $d \ge 1$, the set of \textit{trivial points of degree at most $d$} is
		\[ \Cb^{(d)}(k)_n := \bigcup_{K \subset \kk, [K:k] \le d} \Cb(K)_n. \]
	\end{enumerate}
\end{defn}

The main result we will use connects the finiteness of rational points to the structure of the tower.

\begin{thm}[\cite{Xarles2013}]
	\label{xarles_thm}
	Let $k$ be a number field and let $\Cb = (\{C_n\}, \{\phi_n\})$ be a tower of curves with infinite gonality over $k$. Then for any $d \ge 1$, there exists an integer $n_d$ (depending on $\Cb$ and $d$) such that for any $n \ge n_d$ and any finite extension $K/k$ with $[K:k] \le d$, we have $C_n(K) = \Cb^{(d)}(K)_n$. That is, every point of degree at most $d$ on a sufficiently high level of the tower is trivial.
\end{thm}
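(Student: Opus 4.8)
The plan is to deduce this from Faltings's theorem --- in the form of Frey's gonality bound for points of bounded degree --- combined with a compactness argument running up the tower. Recall Frey's theorem (a consequence of Faltings's theorem on rational points of subvarieties of abelian varieties, applied to the image of the $d$-th symmetric power $\Sym^d C$ under an Abel--Jacobi map): if $C$ is a smooth projective curve over $k$ with $\gamma_k(C) > 2d$, then $C$ has only finitely many closed points of degree at most $d$ over $k$. Since $\gamma_k(\Cb) = \infty$, there is an integer $N = N(\Cb, d)$ with $\gamma_k(C_n) > 2d$ for every $n \ge N$, so the set $S_n := \{\, P \in C_n(\kk) : [\kappa(P):k] \le d \,\}$ is finite for all $n \ge N$. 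Since each $\phi_{m+1}$ is defined over $k$, it carries a point of degree $\le d$ to a point of degree $\le d$; thus the $\phi_{m+1}$ restrict to maps $S_{m+1} \to S_m$, making $(S_n)_{n \ge N}$ an inverse system of finite sets, and $C_n(K) \subseteq S_n$ whenever $[K:k] \le d$.

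I would then identify which points of $S_n$ are trivial by means of K\"onig's lemma. For $P \in S_n$, form the rooted tree $T_P$ whose vertices at level $m \ge n$ are the elements of $S_m$ lying over $P$, with an edge joining each $Q \in S_{m+1}$ to $\phi_{m+1}(Q)$. Every vertex has at most $\deg \phi_{m+1}$ children, so $T_P$ is finitely branching; and because the $\phi$'s are surjective on $\kk$-points while the degree bound propagates only downward, a vertex of $T_P$ at level $m$ forces a vertex at every level between $n$ and $m$. Hence $T_P$ either has finite height, or it has a vertex at every level and K\"onig's lemma yields an infinite branch $P = Q_n, Q_{n+1}, \dots$; along such a branch the residue fields $\kappa(Q_n) \subseteq \kappa(Q_{n+1}) \subseteq \cdots$ form an increasing chain of fields of degree $\le d$ over $k$, so they stabilize at some $K_P$ with $[K_P:k] \le d$, exhibiting $P$ as a trivial point. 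Conversely, a trivial point visibly furnishes a vertex of $T_P$ at every level, and the image of a trivial point under any $\phi_{m+1}$ is again trivial. Consequently the set of trivial points at level $n$ equals $U_n := \bigcap_{m \ge n} \phi_{m,n}(S_m)$; and since each $S_m$ is finite, this descending intersection stabilizes, so $U_n = \phi_{M(n),n}(S_{M(n)})$ for some finite $M(n)$.

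It remains to produce $n_d$ with $S_n = U_n$ for every $n \ge n_d$. Granting this, for $n \ge n_d$ and $[K:k] \le d$ one has $C_n(K) \subseteq S_n = U_n$, and every element of $U_n$ is trivial, hence lies in $\Cb^{(d)}(k)_n$; together with the elementary reverse inclusion this yields the equality asserted in the theorem. This last step is the heart of the matter, and I expect it to be the main obstacle. A non-trivial $P \in S_N$ has $T_P$ of finite height $h_P$, so nothing in $S_n$ lies over it once $n > h_P$; but a \emph{trivial} $P \in S_N$ may have its infinite tree $T_P$ contain non-trivial vertices (those whose subtree below is finite), and these are organized into a disjoint union of finite subtrees, one rooted at each non-trivial vertex whose parent is trivial. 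One must rule out that such non-trivial vertices occur at arbitrarily high levels as $P$ ranges over the finite set $S_N$ --- a statement that fails for abstract finitely-branching trees, and so must exploit both that $\Cb$ is a tower of curves and that \emph{all} the sets $S_m$ are finite simultaneously: the scheme is to push a hypothetical infinite family of newly born non-trivial points down into $S_N$, extract a base point that is hit infinitely often, and derive a contradiction from the interplay between the finiteness of the $S_m$ and the branching structure of its tree. Making this uniformity precise, and keeping track of the varying fields $K_P$ over which the lifted points are defined, is where the real work lies; it is the technical content of Xarles's theorem.
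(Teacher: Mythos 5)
The paper does not prove this statement---it is quoted verbatim as a cited result of Xarles \cite{Xarles2013}---so there is no internal proof to compare against; your proposal must therefore stand on its own, and it does not: it contains a genuine, and indeed self-acknowledged, gap. Your first two stages are sound and match the standard skeleton of such arguments: Frey's consequence of Faltings's theorem gives finiteness of the sets $S_n$ of points of degree at most $d$ once $\gamma_k(C_n)>2d$; the maps $\phi_{m+1}$ do restrict to $S_{m+1}\to S_m$; K\"onig's lemma identifies the trivial points at level $n$ with the stable image $U_n=\bigcap_{m\ge n}\phi_{m,n}(S_m)$; and the residue fields along an infinite branch stabilize, so such a branch is defined over a single field of bounded degree. (One small bookkeeping issue even here: the theorem's conclusion measures the degree of the lifting field $L$ over $K$, not over $k$, so you should track degrees over $K$ throughout, or replace $d$ by $d^2$ where appropriate.)

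The decisive step, however---that $S_n=U_n$ for all sufficiently large $n$, i.e.\ that non-trivial points of degree at most $d$ occur at only finitely many levels of the tower---is exactly what you do not prove. You correctly diagnose why it cannot follow from the combinatorics you have assembled: an inverse system of finite sets can perfectly well have a ``newly dying'' element at every level (an infinite spine with a dead leaf attached at each stage), so finiteness of each $S_m$ plus finite branching is simply not enough, and some genuinely geometric input about towers of curves is required to exclude this configuration. Your closing paragraph sketches only the intention of such an argument (``push the non-trivial points down, extract a repeated base point, derive a contradiction'') and then concedes that making it work ``is the technical content of Xarles's theorem.'' Since everything before this point is routine and this point is the theorem, the proposal is an honest reduction to the hard part rather than a proof. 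To complete it you would need to supply the missing geometric mechanism---in Xarles's treatment this is where Faltings's theorem on rational points of subvarieties of abelian varieties is exploited beyond the per-level finiteness already encoded in Frey's bound---and no amount of refinement of the tree combinatorics will substitute for it.
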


\section{Construction of the Parameter Space}
\label{rpavff}

Let $r, s \geq 2$ be fixed integers and let $k$ be a number field containing a primitive $s$-th root of unity, $\zeta_s$. We construct an algebraic variety that parameterizes curves $C: y^s=ax^r+b$ together with $n+1$ rational points.

Let $\Cc$ be the variety in $\Pp^2_{(u,x,y)} \times_k \Pp^1_{(a,b)}$ defined by the homogenization of $y^s = ax^r+b$. This variety admits an automorphism of order $s$ given by $[\zeta_s]: ([1:x:y], [a:b]) \mapsto ([1:x:\zeta_s y], [a:b])$.

For $n \ge 0$, let $\Vc_n = \Cc^{(0)} \times_k \cdots \times_k \Cc^{(n)}$ be the $(n+1)$-fold fiber product of $\Cc$ over $\Pp^1_{(a,b)}$. An element of $\Vc_n$ consists of a pair $([a:b], (P_0, \dots, P_n))$ where $P_i=([1:x_i:y_i])$ is a point on the curve $C_{a,b}$. Let $G = \langle ([\zeta_s]_0, \dots, [\zeta_s]_n) \rangle \cong \Z/s\Z$ be the group acting on $\Vc_n$. We define the parameter space as the quotient variety $\Wc_n = \Vc_n/G$.

The function field $\KK_n = k(\Wc_n)$ is the fixed field of $\Ll_n = k(\Vc_n)$ under the action of $G$. The extension $\Ll_n/\KK_n$ is cyclic of degree $s$, generated by $y_0$. Let $\widetilde{C}$ be the twist of $C$ by this extension. As shown in \cite{Hazama1991}, $\widetilde{C}$ is defined over the function field $\KK_n$ by the equation:
\begin{equation} \label{twist}
	(ax_0^r+b)y^s = ax^r+b.
\end{equation}
This twisted curve $\widetilde{C}$ has $n+1$ canonical $\KK_n$-rational points:
\begin{equation} \label{Kpoint}
	\widetilde{P}_0 = (1:x_0:1), \quad \widetilde{P}_i = \left(1:x_i:\frac{y_i}{y_0}\right) \quad \text{for } i=1,\dots,n.
\end{equation}
Let $\widetilde{J}_C$ be the Jacobian of $\widetilde{C}$. The points $\widetilde{P}_i$ give rise to points $\widetilde{Q}_i \in \widetilde{J}_C(\KK_n)$.

\begin{thm}[\cite{Salami2019}]
	\label{main3}
	The points $\widetilde{Q}_0, \dots, \widetilde{Q}_n$ are linearly independent in $\widetilde{J}_C(\KK_n)$. If $m_0 = \rk_\Z(\enn_k(J_C))$, which is at least one, then
	\[ \rk(\widetilde{J}_C(\KK_n)) \geq (n+1)m_0. \]
\end{thm}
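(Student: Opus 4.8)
The plan is to trivialize the twist over $\Ll_n$ and then read off the rank from the universal property of the Jacobian as an Albanese variety.

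\emph{Trivializing the twist.} First I would observe that $y_0^s = ax_0^r+b$ holds in $\Ll_n$ by the very definition of the point $P_0$, so the substitution $Y = y\,y_0$ turns equation \eqref{twist} into $Y^s = ax^r+b$. Hence over $\Ll_n$ there is an isomorphism of curves $\widetilde{C}\times_{\KK_n}\Ll_n \xrightarrow{\ \sim\ } C\times_{k(a,b)}\Ll_n$ sending $\widetilde{P}_i\mapsto P_i$, where $P_i=(x_i,y_i)$ is the tautological $i$-th point of $C$, and sending a fixed rational point (or degree-one divisor class) ``at infinity'' of $\widetilde{C}$ to the corresponding one $O$ of $C$. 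Passing to Jacobians gives an isomorphism $\widetilde{J}_C\times_{\KK_n}\Ll_n\xrightarrow{\ \sim\ }J_C\times_{k(a,b)}\Ll_n$ of abelian varieties over $\Ll_n$ that identifies each $\widetilde{Q}_i$ with the Abel--Jacobi point $Q_i:=[P_i]-[O]\in J_C(\Ll_n)$ and is compatible with the action of the constant endomorphisms $\enn_k(J_C)\subseteq\enn_{k(a,b)}(J_C)$. Since $A(F)\hookrightarrow A(F')$ for any extension $F\subseteq F'$ and abelian variety $A/F$, it then suffices to prove that, for a $\Z$-basis $\psi_1,\dots,\psi_{m_0}$ of $\enn_k(J_C)$, the $(n+1)m_0$ points $\psi_j(Q_i)$ (with $0\le i\le n$, $1\le j\le m_0$) are $\Z$-linearly independent in $J_C(\Ll_n)$; this independence then descends back to $\widetilde{J}_C(\KK_n)$ because the $\psi_j$, being defined over $k$, are compatible with the twisting automorphism $[\zeta_s]$, so that $\psi_j(\widetilde{Q}_i)\in\widetilde{J}_C(\KK_n)$.

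\emph{The Albanese structure of $J_C(\Ll_n)$.} Next I would use that $\Ll_n=k(\Vc_n)$ is the function field over $k(a,b)$ of the $(n+1)$-fold self fibre product $V:=\Cc^{(0)}\times_{k(a,b)}\cdots\times_{k(a,b)}\Cc^{(n)}$, so an $\Ll_n$-point of $J_C$ is a $k(a,b)$-rational map from $V$ to $J_C$; such a map extends to a morphism ($V$ smooth, $J_C$ abelian), and the universal property of the Albanese, with $\mathrm{Alb}(V)\cong\mathrm{Alb}(C)^{\,n+1}=J_C^{\,n+1}$, gives a group isomorphism
\[
J_C(\Ll_n)\ \cong\ J_C(k(a,b))\ \oplus\ \mathrm{Hom}_{k(a,b)}\!\big(J_C^{\,n+1},\,J_C\big)\ \cong\ J_C(k(a,b))\ \oplus\ \enn_{k(a,b)}(J_C)^{\,n+1},
\]
the splitting being evaluation at a base point of $V$. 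Under this identification $Q_i$ corresponds to the morphism $V\xrightarrow{\mathrm{pr}_i}C\xrightarrow{j_O}J_C$; since Abel--Jacobi induces the identity on Albanese varieties, its image in $\enn_{k(a,b)}(J_C)^{\,n+1}$ is the tuple with $\mathrm{id}$ in the $i$-th coordinate and $0$ elsewhere, so $\psi_j(Q_i)$ has image the tuple with $\psi_j$ in the $i$-th coordinate and $0$ elsewhere. These $(n+1)m_0$ tuples are visibly $\Z$-linearly independent in $\enn_{k(a,b)}(J_C)^{\,n+1}$ because $\psi_1,\dots,\psi_{m_0}$ are; as the projection onto the second summand is a homomorphism, the points $\psi_j(Q_i)$ are themselves independent in $J_C(\Ll_n)$. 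By the reduction above this yields $\rk\big(\widetilde{J}_C(\KK_n)\big)\ge(n+1)m_0$, with $m_0\ge1$ since $\mathrm{id}\in\enn_k(J_C)$; and the linear independence of $\widetilde{Q}_0,\dots,\widetilde{Q}_n$ themselves drops out by writing $\mathrm{id}=\sum_j a_j\psi_j$ (some $a_j\neq0$), so that $\widetilde{Q}_i=\sum_j a_j\,\psi_j(\widetilde{Q}_i)$.

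\emph{The main obstacle.} The conceptual core is all in the second step — a rational map from a smooth variety to an abelian variety is a morphism; $\mathrm{Alb}$ sends products to products; Abel--Jacobi induces the identity on $\mathrm{Alb}$ — and is essentially formal once set up. The delicate bookkeeping I have suppressed, and where the real work lies, is twofold: (i) choosing a convenient $\KK_n$-rational base point or degree-one divisor class on the (possibly singular at infinity) models of $\widetilde{C}$ and $C$, so that $j_O$ is defined over the right field — one always has the rational point $\widetilde{P}_0$ available, and using a point ``at infinity'' instead is precisely what produces $n+1$ rather than $n$ independent points; and (ii) checking the Galois descent used in the trivialization, namely that the twisting cocycle, valued in $\langle[\zeta_s]\rangle\subseteq\aut(C)$, commutes with the endomorphisms in $\enn_k(J_C)$, so that the latter genuinely act on $\widetilde{J}_C$ over $\KK_n$. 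Everything else is routine.
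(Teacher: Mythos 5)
The paper offers no proof of this statement---it is quoted directly from \cite{Salami2019}---but your argument (trivialize the twist over $\Ll_n$, identify $J_C(\Ll_n)$ with $J_C(k(a,b))\oplus\enn_{k(a,b)}(J_C)^{n+1}$ via the Albanese property of the $(n+1)$-fold product of $C$, and read off the coordinates of the $\psi_j(Q_i)$) is exactly the Hazama-style mechanism underlying the cited construction, and it is essentially correct, including the observation that basing Abel--Jacobi at a $[\zeta_s]$-fixed divisor class at infinity is what yields $n+1$ rather than $n$ independent points. The one substantive point you defer rather than settle is the one you flag yourself: that every $\psi\in\enn_k(J_C)$ commutes with $[\zeta_s]_*$, without which $\psi_j(\widetilde{Q}_i)$ need not descend to $\widetilde{J}_C(\KK_n)$ and the bound $(n+1)m_0$ would only be guaranteed with $m_0$ the rank of the commutant of $[\zeta_s]_*$; the linear independence of $\widetilde{Q}_0,\dots,\widetilde{Q}_n$ themselves requires no such hypothesis.
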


Given any  integers $n\geq 1$ and a number field  $k$, let  
$\Sym^{n+1}(\Pp^1_x) $  denotes the 
symmetric product of $n+1$ copies of the protective line over   $\kk$, an algebraically closed field containing $k$.  In  Corollary 2.6 of \cite{Maakestad2005},  it is proved that $\Sym^{n+1}(\Pp^1_x)$ is $k$-isomorphic to a projective space $\Pp^{(n+1)}$.
%  with coordinates $p_0, \cdots, p_n$, where $p_i$ is the $i$-th  elementary homogeneous 
%symmetric polynomial in $x_i$'s. 
For  the integers $n, r \geq 1$, we define  $\Hc_{r,n}$ to be the Zariski closure of the affine hypersurface: 
$$ H_{r, n}: \prod_{0\leq i < j\leq n}^{} (x_i^r-x_j^r) =0,$$
%  \ \ \  \ \ \ (II) \prod_{1\leq i < j\leq n}^{} x_i x_i(x_i^r-x_j^r) =0 $$
%in the protective space  $ \Pp^n$ with coordinates $x_0, x_1, \ldots , x_n$,
in $\Aa^1 \times_k \cdots \times_k \Aa^1 $,
which can be naturally embedded in $\Sym^{n+1}(\Pp^1_x)$ by the rational map
$$(x_0, x_0,\cdots, x_n)\mapsto  \left( [1:x_0], [1:x_1],\cdots, [1:x_n]\right).$$
Let  $\Uc_{r,n}$  be the  complement of   $\Hc_{r,n}$ in
$\Sym^{n+1}(\Pp^1_x) \cong \Pp^{n+1},$ and 
denote by  $\Hc_{r, n}(k)$ and $\Uc_{r,n} (k)$ the set of $k$-rational points  on 
$\Hc_{r,n}$ and $\Uc_{r, n} $ respectively.
% for $*= I,$ and $II$, respectively.
% Throughout this paper, we will  use $\Hc^*_{r,n}$ and $\Uc^*_{r,n}$, where $*$ refers to the cases $I$ or $II$. 

For a   set of mutually distinct elements $\alpha_0,  \alpha_1, \ldots, \alpha_n \in k$,
denote ${\bf a_n}=\left([1:\alpha_i]   \right)_{i=0}^n \in \Sym^{n+1}(\Pp^1_x).$
Let $\Wc_{\ba_n}$ be the fiber of the natural projection $\Wc_n \to \Sym^{n+1}(\Pp^1_x)$ over the point corresponding to $\ba_n$.

\begin{prop}
	\label{pr1}
	There is a one-to-one correspondence between the set $\Cc_{\ba_n}(k)$ of smooth curves $C: y^s = ax^r+b$ passing through points $( \alpha_i, \beta_i)$ for $i=0,\dots,n$, and the set of $k$-rational points on an open subset of the fiber $\Wc_{\ba_n}$.
\end{prop}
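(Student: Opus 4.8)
The plan is to make both sides of the asserted bijection completely explicit in coordinates and then exhibit the two inverse constructions; the one genuinely delicate point is the behaviour of $k$-rational points under the quotient by $G\cong\Z/s\Z$. First I would unwind the fiber: a point of $\Vc_n$ lying over $\ba_n=([1:\alpha_i])_{i=0}^n$ is a tuple $\big([a:b],(1:\alpha_0:y_0),\dots,(1:\alpha_n:y_n)\big)$ with $y_i^s=a\alpha_i^r+b$ for all $i$, and $\Wc_{\ba_n}$ is this locus modulo the diagonal identification $(y_0,\dots,y_n)\sim(\zeta_s y_0,\dots,\zeta_s y_n)$. I would take $\Wc_{\ba_n}^{\circ}\subset\Wc_{\ba_n}$ to be the open subset where $ab\neq0$ and $a\alpha_i^r+b\neq0$ for every $i$ (shrinking slightly further if needed so that the projective model of $C_{a,b}$ is smooth, of the fixed genus $g_{r,s}(C)\ge1$). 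On this locus all $y_i\neq0$, so the $n+1$ marked points lie in the smooth affine part of $C_{a,b}$ and, crucially, $G$ acts freely, i.e. $\Vc_n\to\Wc_n$ restricts to an \'etale $\Z/s\Z$-torsor over $\Wc_{\ba_n}^{\circ}$. By Corollary~2.6 of \cite{Maakestad2005} one may identify $\Sym^{n+1}(\Pp^1_x)$ with $\Pp^{n+1}$, and passing to the fiber over $\ba_n$ is simply the substitution $x_i=\alpha_i$.

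The forward map is immediate: a smooth $C_{a,b}\in\Cc_{\ba_n}(k)$ through $(\alpha_i,\beta_i)$ with $\beta_i\in k$ gives the $k$-point $\big([a:b],(1:\alpha_i:\beta_i)_i\big)$ of $\Vc_n$ over $\ba_n$, whose image in $\Wc_n=\Vc_n/G$ lands in $\Wc_{\ba_n}^{\circ}(k)$; this is well defined once the marked points are recorded only up to the diagonal $\mu_s$-action, which is precisely the identification built into $G$. The substance is surjectivity. Take $w\in\Wc_{\ba_n}^{\circ}(k)$, i.e. a $\mathrm{Gal}(\kk/k)$-stable $G$-orbit: its $(a,b)$-entry is $G$-invariant and descends to $[a:b]\in\Pp^1(k)$, and for any lift $(y_0,\dots,y_n)\in(\kk^{\times})^{n+1}$ the Galois-stability of the orbit forces $\sigma(y_i)=\zeta_s^{c(\sigma)}y_i$ with one and the same exponent $c(\sigma)$ for all $i$, where $c\colon\mathrm{Gal}(\kk/k)\to\Z/s\Z$ is a homomorphism (this is the point at which $\zeta_s\in k$ is used). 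Consequently every ratio $y_i/y_0$ lies in $k^{\times}$, while the sole Kummer class $[\,y_0^s\,]\in k^{\times}/(k^{\times})^s$ is exactly the obstruction to lifting $w$ to a $k$-point of $\Vc_n$.

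The main obstacle is precisely that this class may be nontrivial, and I would remove it by exploiting that the family \eqref{curve} is stable under $\mu_s$-twisting: with $d=y_0^s=a\alpha_0^r+b$ and an affine representative $(a,b)$ of $[a:b]$, the $d$-twist of $C_{a,b}$ is again a member of the family, namely $C_{a/d,b/d}\colon y^s=(a/d)x^r+(b/d)$, and at $x=\alpha_i$ it takes the value $(a\alpha_i^r+b)/d=(y_i/y_0)^s$, an $s$-th power in $k$. Hence $C_{a/d,b/d}\in\Cc_{\ba_n}(k)$, passing through the $k$-rational points $(\alpha_i,y_i/y_0)$ — with $(\alpha_0,1)$ above $\alpha_0$, matching the canonical point $\widetilde P_0$ of \eqref{Kpoint} — so that $w$ lies in the image of the forward map, proving surjectivity. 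For injectivity one observes that $w$ remembers $[a:b]$ together with $(y_0,\dots,y_n)$ up to a common root of unity, which is exactly the data an element of $\Cc_{\ba_n}(k)$ carries once the harmless rescaling ambiguity $C_{a,b}\simeq C_{\lambda a,\lambda b}$, $\lambda\in(k^{\times})^s$, is pinned down; normalizing the marked point above $\alpha_0$ to $(\alpha_0,1)$ does this, and in that normalization $b=1-a\alpha_0^r$, the curve becomes $y^s=a(x^r-\alpha_0^r)+1$ with the single parameter $a$, and the remaining conditions $z_i^s=a(\alpha_i^r-\alpha_0^r)+1$ for $i=1,\dots,n$ are precisely the equations of the complete intersection curve $\Xc_{\ba_n}$ of the introduction. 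Thus the correspondence is bijective and loses no information, which is the proposition.
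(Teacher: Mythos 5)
Your proposal is correct and rests on the same mechanism as the paper's proof: the twist of $C_{a,b}$ by $d=a\alpha_0^r+b$ (equivalently, rescaling the $y$-coordinate by $\beta_0$, which is exactly the isomorphism $\psi_*$ and the twisted curve \eqref{twist} used in the text). Your write-up is in fact more complete than the paper's, which only exhibits the forward direction and the twist isomorphism; your Galois-descent/Kummer argument showing that a $k$-point of the quotient $\Wc_{\ba_n}=\Vc_n/G$ forces $y_i/y_0\in k^\times$ and that the residual class $[y_0^s]\in k^\times/(k^\times)^s$ is absorbed by twisting within the family is precisely the surjectivity step the paper leaves implicit.
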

\begin{proof}
	Since $C\in \Cc_{\bf a_n}$,   there exist the mutually distinct elements $\beta_1,\ldots, \beta_n \in k$ with $\beta_1\neq 0$   such that  $\beta_i^s=a \alpha_i^r +b$ and   the points 
	$P_i=(1:\alpha_i:\beta_i)$ belong to $C(k)$ for  $i=1,\cdots, n$. 
	The smoothness of $C$ means that $a b\neq 0$, i.e., $[a:b] $ is a closed set $\Delta_0$ in $\Pp^1$, and hence a closed set $\Delta_n$ in  $\Wc_{\ba_n}$.
	On the other hand, 	we have  a $k$-isomorphism 
	$\psi_*: \tilde{C}(k) \rightarrow  C(k)$ defined  by
	$(1:x:y) \mapsto (1:x: \beta_0 y)$ which admits an inverse     map
	$\varphi_*^{-1}: C(k) \rightarrow  \tilde{C}(k)$ defined by
	$(1:x:y) \mapsto (1:x: y/\beta_0)$.
	By the map $\psi_*$, the points $\tilde{P}_1=(1:\alpha_0: 1)$ and 
	$\tilde{P}_i=(1:\alpha_i:\beta_i/\beta_0)$ in $\tilde{C}(k)$
	are mapped on the points $P_i$'s in  $C(k)$ for $i=1,\cdots, n$.			
\end{proof}

\begin{prop}
	\label{linear-independence-open}
	The set of points $\ba_n \in \Uc_{r,n}(k)$ for which there exists a curve $C \in \Cc_{\ba_n}(k)$ such that the corresponding rational points on $J_C(k)$ are linearly independent is a Zariski open subset of $\Uc_{r,n}$.
\end{prop}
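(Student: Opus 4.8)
The plan is to establish the condition relatively over the parameter space $\Wc_n$ and then transport it down the projection $\pi\colon\Wc_n\to\Sym^{n+1}(\Pp^1_x)\cong\Pp^{n+1}$. First I would delete from $\Wc_n$ the closed loci where $ab=0$ (so that the curve is smooth), where two of the $x$-coordinates coincide (the preimage of $\Hc_{r,n}$), and where the $n+1$ marked points are not pairwise distinct; over the resulting dense open $\Wc_n^{\circ}$ one has a smooth proper relative curve with $n+1$ disjoint sections, hence an abelian scheme $\Jc^{\circ}\to\Wc_n^{\circ}$ equipped with the sections $\widetilde{Q}_0,\dots,\widetilde{Q}_n$ produced in Section \ref{rpavff}. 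By Theorem \ref{main3} these sections are $\Z$-linearly independent in $\Jc^{\circ}(\KK_n)$, where $\KK_n=k(\Wc_n^{\circ})$.

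Next I would show that the locus $\Wc_n^{\mathrm{li}}\subseteq\Wc_n^{\circ}$ of points $w$ at which $\widetilde{Q}_0(w),\dots,\widetilde{Q}_n(w)$ remain $\Z$-linearly independent in the fibre $\Jc^{\circ}_w$ is Zariski open and dense. For a nonzero $\mathbf m\in\Z^{n+1}$ the section $\sum_i m_i\widetilde{Q}_i$ vanishes along a closed set $T_{\mathbf m}\subsetneq\Wc_n^{\circ}$ — proper, by the genericity arranged above — and $\Wc_n^{\mathrm{li}}$ is exactly the complement of $\bigcup_{\mathbf m\ne 0}T_{\mathbf m}$. A priori this is only a \emph{countable} union of proper closed subsets, so the real content is to bound the relations: using that $\Jc^{\circ}(\KK_n)$ is finitely generated (Lang--N\'eron) together with a specialization theorem of N\'eron--Silverman type for $\Jc^{\circ}\to\Wc_n^{\circ}$, I would need to produce an integer $N_0$ such that any dependence relation realized in some fibre may be taken with $\max_i|m_i|\le N_0$. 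Then $\bigcup_{\mathbf m\ne 0}T_{\mathbf m}=\bigcup_{0<\max_i|m_i|\le N_0}T_{\mathbf m}$ is a finite union of proper closed sets, and $\Wc_n^{\mathrm{li}}$ is open and dense.

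Finally I would shrink the base to a dense open $U$ over which $\pi$ is flat, hence open, so that $V:=\pi(\Wc_n^{\mathrm{li}})\cap U\cap\Uc_{r,n}$ is Zariski open in $\Uc_{r,n}$. By Proposition \ref{pr1}, a $k$-point $\ba_n$ belongs to the set in the statement precisely when the fibre $\Wc_{\ba_n}$ carries a $k$-point lying in $\Wc_n^{\mathrm{li}}$; since in the relevant range of $n$ these fibres are unirational over $k$ with Zariski-dense $k$-points, the mere presence of a $\kk$-point of $\Wc_n^{\mathrm{li}}$ over $\ba_n$ forces a $k$-point there, and this identifies the set in the statement with $V(k)$.

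The step I expect to be the main obstacle is the uniform bound $N_0$ in the second paragraph: without it the dependence locus is only a countable union of proper closed subvarieties, which need not be Zariski closed — as one already sees for the torsion locus of a single non-torsion section — so this is exactly where finite generation of the generic Mordell--Weil group and a usable specialization theorem become indispensable. A secondary subtlety is the descent from $\kk$-points to $k$-points of the fibres $\Wc_{\ba_n}$ in the last step, which is why their unirationality has to be verified.
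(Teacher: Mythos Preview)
Your route is far more elaborate than the paper's: the paper simply invokes Silverman's Specialization Theorem \cite{Silverman1983}, asserts that the Mordell--Weil rank is lower semi-continuous on the base of an algebraic family of abelian varieties, and reads off openness from the generic rank bound of Theorem~\ref{main3}. No fibrewise analysis over $\Wc_n$, no coefficient bound $N_0$, and no descent through $\pi$ appear there.

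There are two genuine gaps in your plan. First, the uniform $N_0$ you hope to extract in the second paragraph does not exist, and your own closing example shows exactly why: a single non-torsion section of an abelian scheme can specialize to $m$-torsion for arbitrarily large $m$, so the relations realized in special fibres have unbounded coefficients. Lang--N\'eron only controls the generic Mordell--Weil group, and Silverman's theorem gives injectivity of specialization off a set of \emph{bounded height}, not off a Zariski-closed set; neither of these produces such an $N_0$, and in general $\bigcup_{\mathbf m\neq 0}T_{\mathbf m}$ is genuinely a countable union whose complement is not Zariski open over $\kk$. Second, the unirationality you rely on in the last paragraph fails precisely in the range where the proposition is used downstream: by Proposition~\ref{p1} the fibres $\Xc_{\ba_n}$ (birational to $\Wc_{\ba_n}$) have genus $\geq 2$ once $n\geq n_0$, so they are not unirational, and a $\kk$-point of $\Wc_n^{\mathrm{li}}$ over $\ba_n$ need not lift to a $k$-point. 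The paper sidesteps both issues by working directly with specialization over the base rather than by pushing an open locus down through the projection.
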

\begin{proof}
	This is a direct consequence of Silverman's Specialization Theorem \cite{Silverman1983}. The rank of the Mordell-Weil group is a lower semi-continuous function on the base of an algebraic family of abelian varieties. The set of parameters for which the rank is at least a given value is therefore a Zariski-open set. Since the generic rank over the function field $k(\ba_n)$ is at least $(n+1)m_0$ by Theorem \ref{main3}, the set of specializations $\ba_n$ where the rank remains at least this high is open.
\end{proof}

\section{A Family of Complete Intersection Curves}
\label{curves}

The parameter space $\Wc_n$ is birational to a more geometrically accessible object: a complete intersection variety.For two fixed  integers $ s, r \geq 2$, let $\Xc_n\in  \Pp^n_X \times \Pp^n_Y$  be the variety  defined by the $n-1$ bi-homogeneous equations of bi-degree $(r,s)$:
\begin{equation} \label{eq3}
	f_{i-1}: = (X_i^r - X_1^r)Y_0^s + (X_0^r - X_i^r)Y_1^s + (X_1^r - X_0^r)Y_i^s = 0, \quad (i=2, \dots, n).
\end{equation}
These equations can be expressed more symmetrically via determinants:
\begin{equation} \label{eq3b}
	f_{i-1} = \det \begin{pmatrix} 1 & 1 & 1 \\ X_0^r & X_1^r & X_i^r \\ Y_0^s & Y_1^s & Y_i^s \end{pmatrix} = 0, \quad (i=2, \dots, n).
\end{equation}

\begin{thm}
	\label{thm2}
	For any $n \geq 2$, the varieties $\Wc_n$ and $\Xc_n$ are $k$-birational.
\end{thm}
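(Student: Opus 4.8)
The plan is to exhibit explicit inverse rational maps between $\Wc_n$ and $\Xc_n$. Recall that $\Wc_n = \Vc_n/G$, where a point of $\Vc_n$ is a tuple $([a:b], (P_0,\dots,P_n))$ with $P_i = (1:x_i:y_i)$ satisfying $y_i^s = a x_i^r + b$. The key observation is that the three relations $y_0^s = a x_0^r+b$, $y_1^s = a x_1^r + b$, $y_i^s = a x_i^r + b$ form a linear system in the two unknowns $(a,b)$; its solvability forces the $3\times 3$ determinant in \eqref{eq3b} to vanish, which is exactly $f_{i-1}=0$ with $X_j = x_j$, $Y_j = y_j$ (after clearing the homogenizing coordinate $u$). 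Conversely, given a point of $\Xc_n$ at which the $x_0^r, x_1^r$ are distinct, the first two relations determine $(a,b)$ uniquely (Cramer's rule), and the remaining equations $f_{i-1}=0$ guarantee the compatibility $y_i^s = a x_i^r+b$ for $i \geq 2$. So first I would fix the birational chart: work on the open locus of $\Vc_n$ where $u \neq 0$ (dehomogenize all $\Pp^2$ factors) and where $x_0^r \neq x_1^r$, and on the open locus of $\Xc_n$ where $X_0 \neq 0$, $Y_0 \neq 0$, $X_0^r \neq X_1^r$. On these loci, the forward map sends $([a:b],(P_i)_i)$ to the point with $X_i = x_i$, $Y_i = y_i$ in $\Pp^n_X \times \Pp^n_Y$; the backward map sends $((X_i),(Y_i))$ to $([a:b], (P_i))$ with $P_i = (1:X_i/X_0 : Y_i/Y_0)$ and $[a:b]$ recovered from the two linear equations. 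These are clearly mutually inverse where both are defined, and one checks both are morphisms on the indicated open sets.

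The one genuine subtlety is the passage to the quotient by $G \cong \Z/s\Z$. The group $G$ acts on $\Vc_n$ by $(y_0,\dots,y_n) \mapsto (\zeta_s y_0, \dots, \zeta_s y_n)$, so the coordinates $X_i := x_i$ are $G$-invariant, and the ratios $Y_i/Y_0 = y_i/y_0$ are $G$-invariant, but the individual $Y_i = y_i$ are not. However, a point of $\Pp^n_Y$ is itself only defined up to a global scalar, and the substitution $(y_0,\dots,y_n)\mapsto(\zeta_s y_0,\dots,\zeta_s y_n)$ is precisely multiplication by the scalar $\zeta_s$; hence the image point $(Y_0:\dots:Y_n) \in \Pp^n_Y$ is literally unchanged by $G$. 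Therefore the forward map $\Vc_n \dashrightarrow \Xc_n$ is $G$-invariant and descends to $\Wc_n = \Vc_n/G \dashrightarrow \Xc_n$. For the reverse direction one must lift: given a point of $\Xc_n$ in the chart above, choosing the representative with $Y_0 = 1$ pins down $y_0 = 1$, hence a well-defined point of $\Vc_n$ whose $G$-orbit gives the point of $\Wc_n$; different choices of representative in $\Pp^n_Y$ differ by scaling all $y_i$, which is absorbed into the $G$-orbit (after noting that only scalings by $s$-th roots of unity preserve the relations $y_i^s = ax_i^r+b$ with $(a,b)$ fixed — and those are exactly $G$). I would present this as: the composite $\Vc_n \dashrightarrow \Xc_n \dashrightarrow \Vc_n$ is $G$-equivariant and induces the identity on $\Wc_n$.

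The remaining points are bookkeeping: one must check that the loci removed ($u=0$ or $X_0Y_0=0$ or $x_0^r = x_1^r$) are proper closed subsets, so that the maps are defined on dense opens, which is immediate since $\Wc_n$ and $\Xc_n$ are irreducible of the same dimension $n+1$ (the base $\Sym^{n+1}(\Pp^1)$ has dimension $n+1$ and the fibers are finite, respectively $\Xc_n$ is cut out by $n-1$ equations in $\Pp^n\times\Pp^n$ of dimension $2n$); and one should note that the choice of the indices $0,1$ among $0,\dots,n$ to solve for $(a,b)$ is harmless since on $\Uc_{r,n}$ at least one pair $x_i^r \neq x_j^r$ exists, and the equations $f_{i-1}$ are set up symmetrically enough that any such choice gives the same rational map. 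The main obstacle is thus not any hard geometry but the careful handling of the quotient by $G$ and of the projective scalings in both $\Pp^n_X$ and $\Pp^n_Y$ simultaneously; once the dehomogenized chart is fixed and the $G$-invariance of $X_i$ and of the point $(Y_0:\dots:Y_n)$ is observed, the birationality is a direct verification.
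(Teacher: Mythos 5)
Your overall strategy (explicit maps in both directions, Cramer's rule to recover $(a,b)$ from two of the relations, the determinant identity \eqref{eq3b} as the compatibility condition, and the observation that the diagonal $G$-action is absorbed into the projective scaling of $\Pp^n_Y$) is exactly the paper's strategy, and that part of your write-up is fine. But there is a concrete error in your dimension bookkeeping, and it is not cosmetic: the fibers of $\Wc_n \to \Sym^{n+1}(\Pp^1_x)$ are \emph{not} finite. Over a fixed tuple of $x$-coordinates the parameter $[a:b]$ still varies in $\Pp^1$, and only then are the $y_i$ determined up to roots of unity; the fiber $\Wc_{\ba_n}$ is a curve (this is precisely why Proposition \ref{pr1} and Section \ref{curves} make sense). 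Hence $\dim \Wc_n = (n+1)+1 = n+2$, whereas $\Xc_n \subset \Pp^n_X\times\Pp^n_Y$ cut by $n-1$ equations has dimension $n+1$. Consistently with this, your two maps are not mutually inverse: the composite $\Vc_n \dashrightarrow \Xc_n \dashrightarrow \Vc_n$ sends $x_i \mapsto x_i/x_0$, which is the identity only on the locus $x_0=1$. What is really happening is that your forward map contracts the one-parameter family $(x_i, y_i; a,b) \mapsto (\lambda x_i,\, y_i;\, \lambda^{-r}a,\, b)$, which preserves every relation $y_i^s = ax_i^r+b$ but leaves the point $([x_0:\cdots:x_n],[y_0:\cdots:y_n])$ unchanged; passing from the individual $x_i$ to the projective point $[x_0:\cdots:x_n]$ loses exactly one dimension, so no choice of inverse can recover it.

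In fairness, the paper's own proof has the same defect (its stated inverse uses $X_i$ as an affine coordinate and the non-invariant monomials $Y_0^{s-1}Y_i$, so it is not well defined on $\Pp^n_X\times\Pp^n_Y$ either), and the statement that is actually needed and used downstream is the fiberwise one: for a fixed $\ba_n$ with honest affine coordinates $\alpha_i \in k$ and distinct $\alpha_i^r$, the map $([a:b],(y_i)) \mapsto [y_0:\cdots:y_n]$ gives a birational equivalence $\Wc_{\ba_n} \dashrightarrow \Xc_{\ba_n}$, with inverse furnished by your Cramer's-rule computation. Your determinant argument does establish that fiberwise claim correctly (and your handling of the $G$-quotient there is right); to repair the proposal you should either restrict to that statement, or enlarge $\Xc_n$ so that it remembers one more coordinate (equivalently, work over $\Aa^{n+1}_X$ or over $\Sym^{n+1}(\Pp^1_x)\cong\Pp^{n+1}$ rather than $\Pp^n_X$), rather than asserting a dimension count that is false.
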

\begin{proof}
	The birational map $\varphi_n: \Wc_n \dashrightarrow \Xc_n$ is defined by
	\[ \varphi_n: (([1:x_i:y_i]), [a:b]) \mapsto ([x_0:\dots:x_n], [ax_0^r+b : y_1 y_0^{s-1} : \dots : y_n y_0^{s-1}]). \]
	This map is an isomorphism from the dense open set of $\Wc_n$ where $ax_0^r+b \neq 0$ to its image in $\Xc_n$. Its inverse is well-defined on the dense open set where $X_1^r - X_0^r \neq 0$.
	Letting $X= [ X_0: \cdots: X_n]$ and $Y=[ Y_0 : 
	\cdots : Y_n]$, the inverse map $\varphi_n^{-1}$  is defined by
	$$ \left( X, Y \right) \mapsto 
	\left(  [1:X_0: a X_0^r+b], [1: X_1: Y_0^{s-1} Y_1], \cdots, [1: X_n: Y_0^{s-1} Y_n], [ a: b ]\right),$$ 
	where  
	\begin{equation*} 
		%\begin{split}
		a=\frac {Y_0^s-Y_1^s}{x_1^r-x_0^r}, \  \text{and} \ b= \frac {x_0^r Y_1^s -x_1^r Y_0^s}{x_1^r-x_0^r}.
		%\\ (*=II) &\ a=(x_1 Y_0^s-x_0 Y_1^s)/D, \ b= (x_0^r Y_1^s -x_1^r Y_0^s)/D, \   D= x_0 x_r (x_1^{r-1}-x_0^{r-1})\not =0.
		%\end{split}
	\end{equation*}
	
\end{proof}

Let $g_{1,n}: \Xc_n \to \Pp^n_X$ be the projection. Denote by $\Xc_{\ba_n} = g_{1,n}^{-1}(\ba_n)$ the fiber
over a point $\ba_n  \in \Pp^{n+1}_X(k) \cong \Sym^{n+1}(\Pp^1_x)$ with distinct $\alpha_i$.

\begin{prop}
	\label{p1}
	Let $\ba_n \in \Uc_{r,n}(k)$ be a $k$-rational point.
	\begin{enumerate}
		\item $\Xc_{\ba_n}$ is a smooth, complete intersection curve over $k$ of genus
		\[ g(\Xc_{\ba_n}) = 1 + \frac{s^{n-1}((n-1)(s-1)-2)}{2}. \]
		\item The $k$-gonality of $\Xc_{\ba_n}$ satisfies $\gamma_k(\Xc_{\ba_n}) \geq (s-1)s^{n-2}$.
	\end{enumerate}
\end{prop}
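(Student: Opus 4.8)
The plan is to analyze the fiber $\Xc_{\ba_n}$ as a complete intersection in $\Pp^n_Y$ (with the $X$-coordinates now fixed to $\alpha_0,\dots,\alpha_n$), and to extract both invariants from the explicit defining equations. First I would fix $\ba_n\in\Uc_{r,n}(k)$, so that all the quantities $c_{ij}:=\alpha_i^r-\alpha_j^r$ are nonzero, and rewrite the $n-1$ equations \eqref{eq3} as
\begin{equation*}
	f_{i-1}: \ c_{i1}Y_0^s + c_{0i}Y_1^s + c_{10}Y_i^s = 0, \quad (i=2,\dots,n),
\end{equation*}
which are diagonal forms of degree $s$ in $\Pp^n_Y$. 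Being $n-1$ forms in $\Pp^n$, the expected dimension of the common zero locus is $1$, and I would first check the complete intersection claim: each $f_{i-1}$ involves the new variable $Y_i$ that does not appear in any of the others, so the Jacobian criterion is easy to verify on the relevant open locus, and one confirms $\Xc_{\ba_n}$ is a smooth curve provided $\ba_n$ avoids the hypersurface $\Hc_{r,n}$ (this is exactly where the hypothesis $\ba_n\in\Uc_{r,n}(k)$ is used).

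For the genus, since $\Xc_{\ba_n}$ is a smooth complete intersection of $n-1$ hypersurfaces of degree $s$ in $\Pp^n$, I would apply the adjunction/standard formula for the canonical bundle of a complete intersection: $\omega_{\Xc_{\ba_n}}\cong\OO(\sum d_i - (n+1))|_{\Xc_{\ba_n}} = \OO((n-1)s-(n+1))|_{\Xc_{\ba_n}}$, and then compute the degree via Bézout, $\deg\Xc_{\ba_n}=s^{n-1}$. This gives $2g-2 = s^{n-1}\cdot((n-1)s-(n+1))$, i.e.
\begin{equation*}
	g(\Xc_{\ba_n}) = 1 + \frac{s^{n-1}\big((n-1)s-(n+1)\big)}{2} = 1 + \frac{s^{n-1}\big((n-1)(s-1)-2\big)}{2},
\end{equation*}
matching the stated formula. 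One subtlety to address is the parity: $(n-1)s-(n+1)$ and $s^{n-1}$ must multiply to an even number; I would note $(n-1)s-(n+1)\equiv (n-1)s+(n+1)\equiv (n-1)(s+1)\pmod 2$, and if $s$ is odd this is even, while if $s$ is even then $s^{n-1}$ is even for $n\ge2$, so the product is always even and $g$ is an integer.

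For the gonality bound, I would invoke Lazarsfeld's theorem (Theorem \ref{Lazarsfeld1997}) directly: $\Xc_{\ba_n}\subset\Pp^n_Y$ is a smooth complete intersection curve defined by $n-1$ forms all of degree $d_1=\dots=d_{n-1}=s\ge2$, so $\gamma_k(\Xc_{\ba_n})\ge (d_1-1)d_2\cdots d_{n-1} = (s-1)s^{n-2}$, which is precisely claim (2). The main obstacle I anticipate is twofold: first, verifying carefully that $\Xc_{\ba_n}$ really is a \emph{complete intersection} (not merely that it is cut out set-theoretically by these equations) — one must check the codimension is exactly $n-1$ everywhere, which I would handle by exhibiting that the $Y_i$ for $i\ge2$ are, up to $s$-th roots of the fixed linear combination of $Y_0,Y_1$, determined by $Y_0,Y_1$, so the projection to the $[Y_0:Y_1]$-line is finite of degree $s^{n-1}$ onto its image and hence $\Xc_{\ba_n}$ is irreducible of pure dimension $1$; and second, ensuring the smoothness holds over all of $\Uc_{r,n}(k)$ and not just generically — here one checks the Jacobian matrix of $(f_0,\dots,f_{n-2})$ has rank $n-1$ at every point of $\Xc_{\ba_n}$, which reduces to showing no point has more than one coordinate among $Y_2,\dots,Y_n$ vanishing simultaneously with a degeneracy, a condition guaranteed by $c_{ij}\ne0$. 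Once smoothness and the complete-intersection property are in hand, the genus and gonality both follow from standard machinery with no further difficulty.
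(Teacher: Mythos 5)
Your proposal is correct and follows essentially the same route as the paper: smoothness via the Jacobian criterion on the fiber (using that $\ba_n$ avoids $\Hc_{r,n}$ so all $\alpha_i^r-\alpha_j^r\neq 0$), the genus via the standard adjunction/degree formula for a complete intersection of $n-1$ degree-$s$ hypersurfaces in $\Pp^n$, and the gonality bound by a direct application of Lazarsfeld's theorem. The extra details you supply (the algebraic identity $(n-1)s-(n+1)=(n-1)(s-1)-2$, the parity check, and the analysis of simultaneously vanishing $Y_i$'s via the $2\times2$ minors $(\alpha_0^r-\alpha_1^r)(\alpha_i^r-\alpha_j^r)\neq 0$) are all sound and in fact more explicit than the paper's three-sentence proof.
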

\begin{proof}
	The smoothness follows from the Jacobian criterion, as the defining polynomials are independent on $\Uc_{r,n}$. The genus formula is a standard result for complete intersections (see \cite[Ex. II.8.4]{Ha}). The gonality bound is a direct application of Lazarsfeld's theorem (Theorem \ref{Lazarsfeld1997}).
\end{proof}

\subsection{Rational Points on the Fiber Curves}
We analyze the fibers $\Xc_{\ba_n}$ in the low-genus cases first, before considering the general case using the theory of towers.

\begin{lema} \label{lem:conic_param}
	Let $k$ be a field of characteristic $\neq 2$ and let $\alpha, \beta, \gamma \in k$ with $\alpha+\beta+\gamma=0$. The conic $C: \alpha X^2 + \beta Y^2 + \gamma Z^2 = 0$ has a rational point $[1:1:1]$ and is thus birational to $\Pp^1$. A parameterization is given by:
	\[ X = \alpha u^2 + 2\beta u - \beta; \quad Y = -\alpha u^2 + 2\alpha u + \beta; \quad Z = \alpha u^2 + \beta. \]
\end{lema}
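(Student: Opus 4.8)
The plan is to recognize the displayed formulas as the classical stereographic parameterization of a conic from a rational point, and then to verify them by direct substitution. First I would note that $P_0 = [1:1:1]$ lies on $C$, since substituting it into the defining form yields $\alpha+\beta+\gamma$, which vanishes by hypothesis. In the case of interest, $\alpha\beta\gamma \neq 0$, the conic $C$ is smooth over $k$ because $\operatorname{char} k \neq 2$; hence projection away from $P_0$ gives a birational map $C \dashrightarrow \Pp^1$, and in particular $C$ is $k$-rational. If one of $\alpha,\beta,\gamma$ vanishes, $C$ degenerates to a pair of lines and the same formulas parameterize one component, so this case is harmless.

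To extract the explicit parameterization I would work in the affine chart $Z = 1$, where $P_0 = (1,1)$ and the equation becomes $\alpha X^2 + \beta Y^2 + \gamma = 0$. Substituting the pencil of lines $Y - 1 = t(X-1)$ through $P_0$ and using $\gamma = -\alpha-\beta$, the resulting quadratic in $X$ has $X = 1$ as one root; dividing out the factor $(X-1)$ expresses the second intersection point, and hence the point $[X:Y:Z] \in C$, as an explicit rational function of the slope $t$. Clearing denominators and applying a suitable linear fractional change of parameter $t \mapsto u$ produces precisely the triple $(X,Y,Z) = (\alpha u^2 + 2\beta u - \beta,\ -\alpha u^2 + 2\alpha u + \beta,\ \alpha u^2 + \beta)$. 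For the actual write-up it is cleanest to skip this derivation altogether and just verify the claimed formulas: substituting them into $\alpha X^2 + \beta Y^2 + \gamma Z^2$ and replacing $\gamma$ by $-\alpha-\beta$, every coefficient of the resulting degree-$4$ polynomial in $u$ cancels, so the image lies on $C$.

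It then remains to see that $u \mapsto [X:Y:Z]$ is a birational parameterization. It is non-constant — for instance $u = 1$ gives $[\alpha+\beta:\alpha+\beta:\alpha+\beta] = [1:1:1]$ whenever $\gamma \neq 0$ — and it admits the rational inverse $[X:Y:Z] \mapsto \beta(Z-Y)/\bigl(\alpha(X-Z)\bigr)$, which one checks pulls back to $u$; hence the map has degree $1$ onto $C$, so it is birational (and in the degenerate cases birationality of a line onto a line is immediate). There is no genuine obstacle in this lemma, which is elementary projective geometry; the only mild nuisance is the coefficient bookkeeping in the polynomial identity and choosing the parameter normalization so as to land on exactly the displayed formulas rather than on a projectively equivalent but differently scaled version.
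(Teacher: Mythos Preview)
Your proposal is correct and follows essentially the same approach as the paper's proof: both observe that $[1:1:1]$ lies on the conic, invoke the classical projection-from-a-point method, and then verify the displayed formulas by direct substitution into $\alpha X^2 + \beta Y^2 - (\alpha+\beta)Z^2$. You go slightly further than the paper by supplying the explicit rational inverse $u = \beta(Z-Y)/\bigl(\alpha(X-Z)\bigr)$ and by commenting on the degenerate case $\alpha\beta\gamma = 0$, neither of which the paper addresses; but the strategy is identical.
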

\begin{proof}
	The point $P_0=[1:1:1]$ lies on $C$ since $\alpha(1)^2+\beta(1)^2+(-(\alpha+\beta))(1)^2=0$. A standard method to parameterize a conic with a known rational point is to project from that point. Consider the pencil of lines through $P_0$. A standard calculation shows that the second intersection point of a line in this pencil with the conic is given by rational functions of the line's slope parameter $u$. The given formulas provide a particularly neat parameterization. We can verify it by direct substitution:
	\begin{align*}
		&\alpha X^2 + \beta Y^2 - (\alpha+\beta)Z^2 \\
		&= \alpha(\alpha u^2 + 2\beta u - \beta)^2 + \beta(-\alpha u^2 + 2\alpha u + \beta)^2 - (\alpha+\beta)(\alpha u^2 + \beta)^2 \\
		&= \alpha(\alpha^2 u^4 + 4\alpha\beta u^3 + (4\beta^2-2\alpha\beta)u^2 - 4\beta^2 u + \beta^2) \\
		&\quad + \beta(\alpha^2 u^4 - 4\alpha^2 u^3 + (4\alpha^2-2\alpha\beta)u^2 + 4\alpha\beta u + \beta^2) \\
		&\quad - (\alpha+\beta)(\alpha^2 u^4 + 2\alpha\beta u^2 + \beta^2).
	\end{align*}
	Expanding and collecting terms by powers of $u$ shows that all coefficients vanish, confirming the identity.
\end{proof}

\begin{lema} \label{lem:fermat_to_weierstrass}
	Let $k$ be a field of characteristic $\neq 3$ and let $\alpha, \beta, \gamma \in k$ be nonzero elements with $\alpha+\beta+\gamma=0$. Any $k$-rational point on the Fermat cubic $C: \alpha X^3 + \beta Y^3 + \gamma Z^3 = 0$ with $XYZ \neq 0$ can be mapped to a $k$-rational point on the elliptic curve
	\[ E: S^2 = T^3 - 432\alpha^2\beta^2(\alpha+\beta)^2 \]
	via an explicit birational map.
\end{lema}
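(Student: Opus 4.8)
The plan is to produce the birational map explicitly, via the classical chord--tangent reduction of a plane cubic with a rational point, and then to verify it by direct substitution, exactly as in Lemma~\ref{lem:conic_param}. First I would record that, since $\alpha+\beta+\gamma=0$, the point $O:=[1:1:1]$ lies on $C$; combined with $\alpha\beta\gamma\neq0$ (smoothness), this exhibits $C$ as a genus-one curve with a $k$-rational point, hence an elliptic curve over $k$, so it is $k$-isomorphic to a Weierstrass model and our only remaining task is to compute that model and the isomorphism.

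To carry out the reduction I would anchor everything at $O$. The tangent line to $C$ at $O$ is $\alpha X+\beta Y+\gamma Z=0$, since the gradient of the defining form at $O$ is $3(\alpha,\beta,\gamma)$; restricting the cubic to this line shows that it meets $C$ at $O$ with multiplicity two and at the single further $k$-rational point $P_1=[\,\alpha+2\beta:-(2\alpha+\beta):\alpha-\beta\,]$ (for $\alpha\neq\beta$; the degenerate case is handled by permuting coordinates). With $O$ in hand one runs the standard algorithm: move $O$ to the origin of an affine chart, make a $k$-linear change of coordinates so that the tangent at $O$ becomes a coordinate axis, and apply Nagell's procedure --- equivalently, compute bases of the Riemann--Roch spaces $L(2O)$ and $L(3O)$. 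This outputs, over $k$, a cubic in Weierstrass form $S^2=T^3+a_4T+a_6$ together with explicit rational formulas for the map $C\dashrightarrow E$ and its inverse.

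It remains to pin down the constants. Since $C$ is a diagonal cubic it carries the order-three automorphism $[X:Y:Z]\mapsto[\zeta_3X:Y:Z]$, so $j(C)=0$; equivalently, the $j$-invariant of the Weierstrass model just produced vanishes, forcing $a_4=0$. Thus the model is $S^2=T^3+a_6$, and since $S^2=T^3+D_1$ and $S^2=T^3+D_2$ are $k$-isomorphic precisely when $D_1/D_2\in(k^{\ast})^6$, it suffices to compute $a_6$ up to sixth powers and then fix its exact value from the reduction. Tracking the scaling factors through the algorithm --- or invoking the classical computation of the Jacobian of a smooth diagonal plane cubic $aX^3+bY^3+cZ^3=0$, which is $S^2=T^3-432(abc)^2$ (for which the special case $x^3+y^3=m\leftrightarrow S^2=T^3-432m^2$ serves as a check) --- gives $a_6=-432(\alpha\beta\gamma)^2=-432\,\alpha^2\beta^2(\alpha+\beta)^2$ by $\gamma=-(\alpha+\beta)$. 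Finally I would record that the map and its inverse are ratios of polynomials over $k$, regular on the dense open locus $XYZ\neq0$ minus the finitely many points where the reduction degenerates; this is the asserted birational map.

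The step I expect to be the main obstacle is purely bookkeeping: carrying the auxiliary linear substitutions through the reduction while keeping track of every scaling factor, so that the output constant comes out as exactly $-432\,\alpha^2\beta^2(\alpha+\beta)^2$ and not merely a sixth-power multiple of it, and checking at each stage that no cube roots (or other radicals) are introduced, so that the map stays defined over $k$. The latter is precisely what the hypothesis of a rational point with $XYZ\neq0$ buys us --- after rescaling coordinates this is the point $O=[1:1:1]$ --- whereas without such a point one only recovers $E$ as a torsor rather than as an isomorphic model.
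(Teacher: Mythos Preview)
Your approach is correct and takes a genuinely different route from the paper. You reduce $C$ directly to Weierstrass form using the rational point $O=[1:1:1]$ (present because $\alpha+\beta+\gamma=0$) as origin, via Nagell's algorithm and the classical formula for the Jacobian of a diagonal cubic. The paper instead proceeds in two classical steps: it first cites Selmer's Theorem~I to map $C$ to the auxiliary cubic $C_2:\,U^3+V^3=\alpha\beta\gamma\,W^3$ through explicit degree-nine formulas in $X,Y,Z$, and then applies the standard transformation $T=12\alpha\beta\gamma\,W/(U+V)$, $S=36\alpha\beta\gamma(U-V)/(U+V)$ from $C_2$ to $E$, composing to obtain closed-form expressions for $(T,S)$ directly in terms of $(X,Y,Z)$.

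The paper's route has the advantage of producing fully explicit formulas immediately by quoting two off-the-shelf constructions; your route is more conceptual and makes the $k$-isomorphism $C\cong E$ transparent --- indeed Selmer's map is an isogeny rather than an isomorphism, so your argument actually delivers the word ``birational'' in the lemma statement more honestly. Two small points: the inference ``$j=0$ forces $a_4=0$'' is valid only once you have already passed to short Weierstrass form (requiring $\operatorname{char}k\neq 2$ as well, which is implicit in the shape of $E$); and your final paragraph conflates two roles --- the point $O=[1:1:1]$ lies on $C$ unconditionally from $\alpha+\beta+\gamma=0$, so the hypothesis $XYZ\neq0$ is not needed to supply an origin, only to ensure the given point $P$ sits in the locus where your reduction formulas are regular.
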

\begin{proof}
	The birational map is classical and proceeds in two steps. First,  by Theorem I in \cite{Selmer1951}, any $k$-rational point $P=(X,Y,Z)$ with $XYZ\not =0$
	leads to a $k$-rational point $(U,V, W)$ on the cubic $C_2: U^3+V^3= \alpha \beta \gamma W^3$ with $W \not =0$, which is determined by 
	\begin{equation}
		\label{cub1}
		\begin{split}
			U+V & = -9  \alpha \beta \gamma X^3 Y^3 Z^3,  \\
			U-V & =  (\alpha X^3 - \beta Y^3) (\beta Y^3 - \gamma Z^3) (\gamma Z^3-\alpha X^3),\\
			W& = 3 (\alpha \beta X^3 Y^3 +  \beta \gamma  Y^3 Z^3  +  \alpha \gamma  X^3 Z^3) XYZ.
		\end{split}
	\end{equation}
	Then, the cubic $C_2$ is mapped to the Weierstrass model $E$ via 
	$$T=  \frac{12 \alpha \beta \gamma W}{U+V}, \  \ S= \frac{36 \alpha \beta \gamma(U-V)}{U+V},$$ 
	Then, using the assumption  $\alpha +\beta + \gamma=0$ and a simple calculation shows that the point  $P$ leads to the point $Q$ given by  
	\begin{equation}
		\label{pst}
		\begin{split}
			T(Q) & = \frac{4\left( \alpha \beta (X^3 Z^3 + Y^3 Z^3 - X^3 Y^3) + 
				Z^3 ( \alpha^2 X^3 + \beta^2Y^3)  \right) }{(XYZ)^2},  \\
			S(Q) & =  \frac{4 (\alpha X^3 - \beta Y^3)(\beta Y^3 + (\alpha+ \beta) Z^3)(\alpha X^3+(\alpha+ \beta) Z^3)}{ (XYZ)^3}.
		\end{split}
	\end{equation}	
\end{proof}

The following theorem  determines the set of $k$-rational points on the fibers for  cases
$(s,n) \in \{(2,2), (2,3), (3,2)\}$.

\begin{thm} \label{thm:low_genus_cases}
	Let $\ba_n \in \Uc_{r,n}(k)$.
	\begin{enumerate}
		\item If $s=2, n=2$, $\Xc_{\ba_2}$ is a conic over $k$ containing a rational point, and thus has infinitely many $k$-rational points.
		\item If $s=2, n=3$, $\Xc_{\ba_3}$ is a curve of genus one. There exist infinitely many choices of $\ba_3 \in \Uc_{r,3}(k)$ for which $\Xc_{\ba_3}$ is an elliptic curve of rank $\ge 1$ over $k$.
		\item If $s=3, n=2$, $\Xc_{\ba_2}$ is a curve of genus one. There exist infinitely many choices of $\ba_2 \in \Uc_{r,2}(k)$ for which $\Xc_{\ba_2}$ is an elliptic curve of rank $\ge 1$ over $k$.
	\end{enumerate}
\end{thm}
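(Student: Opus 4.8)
I would prove the three cases by making each fibre $\Xc_{\ba_n}$ completely explicit, locating a canonical rational point, and — in the two genus‑one cases — reducing the rank assertion to the non‑triviality of a single canonical section, which is then propagated over the base by specialisation. Write $A_i=\alpha_i^{\,r}$ and $c_{ij}=A_i-A_j$; since $\ba_n\in\Uc_{r,n}(k)$ every $c_{ij}$ lies in $k^{\times}$. Specialising $X_i=\alpha_i$ in \eqref{eq3} turns the defining equations of the fibre into the diagonal forms $f_{i-1}=c_{i1}Y_0^{\,s}+c_{0i}Y_1^{\,s}+c_{10}Y_i^{\,s}$ $(i=2,\dots,n)$, whose coefficients satisfy $c_{i1}+c_{0i}+c_{10}=0$. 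Hence $O:=[1:1:\cdots:1]\in\Xc_{\ba_n}(k)$; in fact $[\zeta_s^{\,j_0}:\zeta_s^{\,j_1}:\cdots:\zeta_s^{\,j_n}]\in\Xc_{\ba_n}(k)$ for every $(j_0,\dots,j_n)\in(\Z/s\Z)^{n+1}$, because $\zeta_s^{\,s}=1$. By Proposition \ref{p1}, $\Xc_{\ba_n}$ is smooth of genus $0$ when $(s,n)=(2,2)$ and of genus $1$ when $(s,n)\in\{(2,3),(3,2)\}$. One structural remark is used throughout: rescaling $\alpha_i\mapsto\lambda\alpha_i$ multiplies each $f_{i-1}$ by $\lambda^{\,r}$ and so leaves the subscheme $\Xc_{\ba_n}\subset\Pp^n_Y$ \emph{unchanged}; thus the pairwise distinct parameters $(\lambda\alpha_0,\dots,\lambda\alpha_n)\in\Uc_{r,n}(k)$, $\lambda\in k^{\times}$, all carry literally the same fibre curve. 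In particular, for (2) and (3) it suffices to exhibit \emph{one} $\ba_n\in\Uc_{r,n}(k)$ whose fibre has Mordell--Weil rank $\ge 1$; the required infinitude of such parameters is then automatic. Case (1) is now immediate: $\Xc_{\ba_2}$ is the plane conic $c_{21}Y_0^2+c_{02}Y_1^2+c_{10}Y_2^2=0$ with all coefficients nonzero, hence smooth, and it contains $O=[1:1:1]$; a smooth conic over $k$ with a rational point is $k$‑isomorphic to $\Pp^1$ — Lemma \ref{lem:conic_param} with $(\alpha,\beta,\gamma)=(c_{21},c_{02},c_{10})$ gives an explicit parametrisation — so $\#\Xc_{\ba_2}(k)=\infty$.

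For the cases $(s,n)\in\{(2,3),(3,2)\}$ the pair $(\Xc_{\ba_n},O)$ is an elliptic curve over $k$. Single out the universal $k$‑point $P_\star:=[\,1:\zeta_s:1:\cdots:1\,]\in\Xc_{\ba_n}(k)$, i.e.\ $[1:-1:1:1]$ if $s=2$ and $[1:\zeta_3:1]$ if $s=3$ (recall $\zeta_s\in k$); it is a section of $g_{1,n}\colon\Xc_n\to\Pp^n_X$. I claim it is enough to prove that $P_\star$ has infinite order on the generic fibre of $g_{1,n}$. Granting this, Silverman's specialisation theorem \cite{Silverman1983} — applied after restricting the base to a general curve through a suitable point — shows that the specialisation of $P_\star$ has infinite order on $\Xc_{\ba_n}$ for all $\ba_n$ outside a set of bounded height, hence for infinitely many $\ba_n\in\Uc_{r,n}(k)$, and for each such $\ba_n$ one gets $\rk\bigl(J_{\Xc_{\ba_n}}(k)\bigr)=\rk\bigl(\Xc_{\ba_n}(k)\bigr)\ge1$. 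Furthermore, since $\Xc_{\ba_n}\subset\Pp^n_Y$ depends on the $X$‑variables only through the quantities $c_{ij}=X_i^r-X_j^r$, the curve — and the identity ``$N\!\cdot\!P_\star=O$'' — may be studied over $k(c_{ij})$ with the $c_{ij}$ regarded as independent transcendentals subject to $c_{i1}+c_{0i}+c_{10}=0$; consequently, to certify that $P_\star$ is non‑torsion generically it suffices to exhibit a \emph{single} numerical specialisation of the $c_{ij}$ (not constrained to come from an $r$‑th‑power tuple) at which $P_\star$ has infinite order.

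For $(s,n)=(3,2)$: by Lemma \ref{lem:fermat_to_weierstrass} with $(\alpha,\beta,\gamma)=(c_{21},c_{02},c_{10})$ (note $(\alpha+\beta)^2=c_{10}^2$), $\Xc_{\ba_2}$ is $k$‑isomorphic to the Mordell curve $E_d\colon S^2=T^3-432\,d^2$ with $d=c_{21}c_{02}c_{10}$, and by \eqref{pst} the point $P_\star=[1:\zeta_3:1]$ is carried to $Q_\star=\bigl(4\zeta_3\,(c_{10}^2-c_{21}c_{02}),\ 4\,(c_{21}-c_{02})(c_{02}-c_{10})(c_{21}-c_{10})\bigr)$. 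Taking $(c_{21},c_{02},c_{10})=(2,-3,1)$ (realised for $r=1$ by $\ba_2=(0,1,3)$) gives $d=-6$, $E_{-6}\colon S^2=T^3-15552$ and $Q_\star=(28\zeta_3,\,-80)$. The discriminant of $E_{-6}$ is $-2^{16}3^{13}$, so reduction at the primes of $\Q(\zeta_3)$ above $7$ and $13$ is good and a short count gives $\#\widetilde{E}_{-6}(\mathbb F_7)=9$ and $\#\widetilde{E}_{-6}(\mathbb F_{13})=21$; hence the torsion of $E_{-6}$ over $\Q(\zeta_3)$ has order dividing $\gcd(9,21)=3$ and therefore equals $\Z/3\Z$ (generated by $(0,\sqrt{-15552})$, which is $\Q(\zeta_3)$‑rational). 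Since $Q_\star$ has nonzero $T$‑coordinate it is not one of these three points, so $Q_\star$ has infinite order — a fortiori over every $k\supseteq\Q(\zeta_3)$, hence on the generic fibre. This settles (3).

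For $(s,n)=(2,3)$: projection $[Y_0:Y_1:Y_2:Y_3]\mapsto[Y_0:Y_1:Y_2]$ presents $\Xc_{\ba_3}$ as a double cover of the conic $\{c_{21}Y_0^2+c_{02}Y_1^2+c_{10}Y_2^2=0\}$ (the second equation determining $Y_3^2$), and parametrising that conic by Lemma \ref{lem:conic_param} puts $\Xc_{\ba_3}$ in the form $V^2=Q(u)$ with $Q$ an explicit quartic, the origin $O$ lying over $u=1$ and $P_\star=[1:-1:1:1]$ over $u=\infty$. One then converts $V^2=Q(u)$ to short Weierstrass form, takes the specialisation $r=1$, $\ba_3=(0,1,2,3)$, and verifies by Nagell--Lutz (equivalently: that $2P_\star$ is none of the three rational $2$‑torsion points $[1:-1:-1:1],[1:-1:1:-1],[1:1:-1:-1]$ and that $P_\star$ has order neither $6$ nor $8$) that $P_\star$ has infinite order there, hence on the generic fibre; this closes (2). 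The genuine labour of the proof is exactly this last verification: in case (3) Lemma \ref{lem:fermat_to_weierstrass} reduces it to a torsion computation on a $j=0$ Mordell curve, whereas in case (2) it requires producing the explicit quartic model above and then a concrete Nagell--Lutz (or point‑doubling) computation, which I would carry out in full. A secondary subtlety worth flagging is that in case (3) the family $\{E_d\}$ is isotrivial, so the appeal to Silverman's specialisation theorem rests solely on $P_\star$ having infinite order on the generic fibre — precisely the hypothesis the theorem requires — and not on any non‑isotriviality of the family.
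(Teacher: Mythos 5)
Your proposal follows the same skeleton as the paper's proof --- specialise \eqref{eq3} to the diagonal forms $c_{i1}Y_0^s+c_{0i}Y_1^s+c_{10}Y_i^s=0$ with coefficient sum zero, dispose of $(s,n)=(2,2)$ via Lemma \ref{lem:conic_param}, and treat $(3,2)$ via Lemma \ref{lem:fermat_to_weierstrass} and $(2,3)$ via the conic-parametrisation-plus-quartic model --- but it goes considerably further on the rank assertions, which the paper only asserts (``for infinitely many choices, e.g.\ $\alpha_i=i$, this elliptic curve has positive rank'') without argument. Your additions are genuine improvements: the canonical non-trivial section $P_\star=[1:\zeta_s:1:\cdots:1]$ as the designated source of infinite order; the observation that $\alpha_i\mapsto\lambda\alpha_i$ rescales every $f_{i-1}$ by $\lambda^r$ and hence infinitely many distinct parameters in $\Uc_{r,n}(k)$ carry the identical fibre, so one good fibre suffices; the reduction of generic non-torsion to a single numerical specialisation of the $c_{ij}$ (legitimate, since the curve and $P_\star$ are defined over the subfield $k(c_{ij})$ of $k(\alpha_i)$ and torsion order is insensitive to field extension); and, for case (3), a complete and checkable verification --- I confirmed that $Q_\star=(28\zeta_3,-80)$ lies on $S^2=T^3-15552$, that the point counts $9$ and $21$ over $\mathbb{F}_7$ and $\mathbb{F}_{13}$ are correct, and that the torsion is $\Z/3\Z$, so $Q_\star$ is indeed non-torsion.

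Two caveats. First, case (2) is only a plan: the quartic model and the Nagell--Lutz/point-doubling verification at a concrete specialisation are deferred (``which I would carry out in full''), and the parenthetical list of orders to exclude is not actually equivalent to a full Nagell--Lutz check; until that computation is exhibited, case (2) is incomplete --- though no more incomplete than the paper's own one-line assertion. Second, the descent from generic non-torsion to infinitely many $\ba_n\in\Uc_{r,n}(k)$ with $r\ge 2$ via Silverman specialisation requires restricting to a $k$-rational curve in the base over which $P_\star$ stays generically non-torsion; over a countable field ``a general curve'' needs justification. This is patchable (e.g.\ Merel's uniform bound on torsion over $k$ makes the torsion locus of $P_\star$ a finite union of proper closed subsets, whose complement in $\Uc_{r,n}(k)$ is infinite), or avoidable entirely by verifying one specialisation with $c_{ij}=\alpha_i^r-\alpha_j^r$ for an actual $r\ge 2$; you should say which route you take.
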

\begin{proof}
	Let $\alpha_{ij} := \alpha_i^r - \alpha_j^r$.
	(1) For $s=2, n=2$, $\Xc_{\ba_2}$ is defined by the single equation $\alpha_{21}Y_0^2+\alpha_{02}Y_1^2+\alpha_{10}Y_2^2 = 0$. This is a conic in $\Pp^2$. The sum of the coefficients is zero, so by Lemma \ref{lem:conic_param}, it is birational to $\Pp^1$.
	
	(2) For $s=2, n=3$, $\Xc_{\ba_3}$ is the intersection of two quadrics in $\Pp^3$, which is a genus one curve. Parameterizing the first quadric using Lemma \ref{lem:conic_param} and substituting into the second yields a quartic model with a rational point (from the trivial point), hence it is an elliptic curve. For infinitely many choices of $\ba_3$ (e.g., $\alpha_i = i$), this elliptic curve has positive rank.
	
	(3) For $s=3, n=2$, $\Xc_{\ba_2}$ is a plane cubic $\alpha_{21}Y_0^3+\alpha_{02}Y_1^3+\alpha_{10}Y_2^3=0$. This is a Fermat cubic of genus one with a trivial point $[1:1:1]$. By Lemma \ref{lem:fermat_to_weierstrass}, it is an elliptic curve. Again, for many choices of $\ba_2$, this curve has positive rank.
\end{proof}

\begin{thm}
	\label{thm:trivial_points_tower}
	Let $k$ be a number field. For $(s,n) \notin \{(2,2), (2,3), (3,2)\}$, the fiber curves $\Xc_{\ba_n}$ have genus $\geq 2$. For any fixed $\ba_n$, the tower of curves $\{\Xc_{\ba_m}\}_{m \ge n}$ has infinite gonality. Consequently, for $m$ sufficiently large, $\Xc_{\ba_m}(k)$ consists only of trivial points.
\end{thm}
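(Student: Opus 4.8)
The plan is to read the genus bound off Proposition \ref{p1}, organize the fibers $\{\Xc_{\ba_m}\}_{m\ge n}$ into a genuine tower of curves of infinite gonality, and then invoke Xarles's theorem (Theorem \ref{xarles_thm}). For the genus statement, Proposition \ref{p1}(1) gives $g(\Xc_{\ba_m}) = 1 + \tfrac12 s^{m-1}\big((m-1)(s-1)-2\big)$, so $g(\Xc_{\ba_m})\ge 2$ exactly when $s^{m-1}\big((m-1)(s-1)-2\big)\ge 2$. For $s=2$ this reads $2^{m-1}(m-3)\ge 2$, which holds iff $m\ge 4$; for $s=3$ it reads $2(m-2)3^{m-1}\ge 2$, which holds iff $m\ge 3$; and for $s\ge 4$ it already holds at $m=2$, since $s(s-3)\ge 4$, hence for all $m\ge 2$ by monotonicity of the two factors. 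Thus $g(\Xc_{\ba_m})\ge 2$ for every admissible $(s,m)$ outside $\{(2,2),(2,3),(3,2)\}$; note also that if $(s,n)$ lies outside this set then so does $(s,m)$ for every $m\ge n$, and $g(\Xc_{\ba_m})$ is then strictly increasing in $m$.

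Next I would build the tower. Fix $\ba_n\in\Uc_{r,n}(k)$, given by $x$-coordinates $\alpha_0,\dots,\alpha_n$ with pairwise distinct $r$-th powers, and extend them to an infinite sequence $(\alpha_i)_{i\ge 0}$ in $k$ with the same property; this is possible since $k$ is infinite and at each step only the finitely many $x\in k$ with $x^r\in\{\alpha_0^r,\dots,\alpha_m^r\}$ need be avoided. For $m\ge n$ write $\ba_m$ for the truncation, so $\ba_m\in\Uc_{r,m}(k)$ and, by Proposition \ref{p1}, $\Xc_{\ba_m}\subset\Pp^m_Y$ is a smooth, geometrically connected complete intersection curve over $k$. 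Let $\phi_{m+1}\colon\Xc_{\ba_{m+1}}\to\Xc_{\ba_m}$ be induced by the linear projection $\Pp^{m+1}_Y\dashrightarrow\Pp^m_Y$ forgetting $Y_{m+1}$. Its center $[0:\cdots:0:1]$ does not lie on $\Xc_{\ba_{m+1}}$, because the equation \eqref{eq3} with $i=m+1$ evaluates there to $\alpha_1^r-\alpha_0^r\ne 0$; hence $\phi_{m+1}$ is a morphism on all of $\Xc_{\ba_{m+1}}$. Since the equations \eqref{eq3} with $i=2,\dots,m$ do not involve $Y_{m+1}$, the image of $\phi_{m+1}$ is contained in $\Xc_{\ba_m}$, and its generic fiber is defined by an equation of the shape $Y_{m+1}^s = (\text{nonzero element of } k(\Xc_{\ba_m}))$; thus $\phi_{m+1}$ is finite of degree dividing $s$ and non-constant, since the strict growth of the genus forbids it from being an isomorphism. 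Therefore $\Cb = (\{\Xc_{\ba_m}\}_{m\ge n},\{\phi_{m+1}\}_{m\ge n})$ is a tower of curves over $k$.

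Finally, by Proposition \ref{p1}(2) one has $\gamma_k(\Xc_{\ba_m})\ge (s-1)s^{m-2}$, and since $s\ge 2$ this tends to $\infty$ as $m\to\infty$, so $\Cb$ has infinite gonality. Applying Theorem \ref{xarles_thm} with $d=1$ then yields an integer $n_1\ge n$ such that $\Xc_{\ba_m}(k) = \Cb^{(1)}(k)_m = \Cb(k)_m$ for all $m\ge n_1$; that is, every $k$-rational point of $\Xc_{\ba_m}$ is trivial, lifting to a $k$-rational point of $\Xc_{\ba_{m'}}$ for every $m'\ge m$. (In particular $\Xc_{\ba_m}(k)$ is finite for $m\ge n_1$, which also follows directly from Faltings' theorem, Theorem \ref{faltings}, once $g(\Xc_{\ba_m})\ge 2$.)

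The computational ingredients — the genus inequality and the gonality lower bound — are immediate from Proposition \ref{p1}, and the appeal to Xarles's theorem is then formal. The step demanding genuine care is the verification that $\{\Xc_{\ba_m}\}_{m\ge n}$ really is a tower in the required sense: one must confirm that the coordinate projections extend to everywhere-defined, non-constant morphisms of the smooth models (handled above by placing the center of projection off the curve), that each fiber is geometrically connected so the resulting degree-$s$ covers are connected, and that the auxiliary sequence $(\alpha_i)$ can be chosen so that every truncation stays in $\Uc_{r,m}(k)$, keeping Proposition \ref{p1} applicable at all levels. With this in hand, the divergence of the gonality bound and Xarles's theorem complete the proof.
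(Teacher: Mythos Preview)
Your proof is correct and follows exactly the same strategy as the paper's: read off the genus and gonality bounds from Proposition~\ref{p1} and then apply Xarles's theorem (Theorem~\ref{xarles_thm}) with $d=1$. The paper's own argument is much terser and leaves implicit the points you carefully verify---the case analysis for $g\ge 2$, the extension of $\ba_n$ to an infinite sequence in $\Uc_{r,m}(k)$, and the fact that the coordinate projections $\phi_{m+1}$ are everywhere-defined non-constant morphisms---so your write-up is, if anything, a more complete version of the same proof.
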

\begin{proof}
	From Proposition \ref{p1}, for $(s,n)$ outside the specified set, $g(\Xc_{\ba_n}) \ge 2$. Furthermore, the gonality $\gamma_k(\Xc_{\ba_m}) \ge (s-1)s^{m-2}$, which tends to infinity as $m \to \infty$. The tower thus has infinite gonality. The conclusion then follows directly from Theorem \ref{xarles_thm} with $d=1$.
\end{proof}

\section{Finiteness and Uniformity Results}
\label{prof-finite}

We now prove Theorems \ref{main1} and \ref{main2}. The key insight is the correspondence established in Proposition \ref{pr1} between curves in $\Cc_{\ba_n}(k)$ and non-trivial rational points on the fiber curves $\Xc_{\ba_n}$.
\[ \Cc_{\ba_n}(k) \longleftrightarrow \Xc_{\ba_n}(k) \setminus \{\text{trivial points}\}. \]

\begin{proof}[Proof of Theorem \ref{main1}]
	Let $n_0 = 4$ if $s=2$ and $n_0=3$ otherwise.
	For $n < n_0$, Theorem \ref{thm:low_genus_cases} shows that the curve $\Xc_{\ba_n}$ is of genus 0 or 1 and can have infinitely many $k$-rational points. This implies that $\Cc_{\ba_n}(k)$ can be infinite.
	
	For $n \geq n_0$, Proposition \ref{p1} shows that the genus $g(\Xc_{\ba_n}) \geq 2$. By Faltings' Theorem \ref{faltings}, the set of rational points $\Xc_{\ba_n}(k)$ is finite. Therefore, the set $\Cc_{\ba_n}(k)$, corresponding to the non-trivial points, must also be finite. Since any curve in $\Cc_{\ba_n}(k)$ has at least $n+1$ rational points whose images in the Jacobian are linearly independent (by the open condition in Proposition \ref{linear-independence-open}), the rank is bounded by a function of the number of points on these finitely many curves.
\end{proof}

\begin{proof}[Proof of Theorem \ref{main2}]
	Assume the weak version of Lang's conjecture holds. Let $n \ge n_0$. The curves $\Xc_{\ba_n}$ form a family of curves of fixed genus $g = g(n,s)$ parameterized by the base $\Uc_{r,n}$. By the Uniformity Theorem \ref{UB}, there exists a bound $N(k,g)$ such that $\#\Xc_{\ba_n}(k) \leq N(k,g)$ for all $\ba_n \in \Uc_{r,n}(k)$. This immediately implies the uniform bound $M_0 = N(k,g)$ for $\#\Cc_{\ba_n}(k)$.
	
	For the second part, assume $g_{r,s}(C) \geq 2$. The family of curves $\Cc \to \Pp^1_{(a,b)}$ has a generic fiber of genus $\geq 2$. By the Correlation Theorem \ref{corel}, for $m$ sufficiently large, the $m$-th fiber product $\Cc^m_{\Pp^1}$ admits a dominant map to a variety of general type $W$. A curve $C \in \Cc_{\ba_m}(k)$ gives rise to a rational point on $\Cc^m_{\Pp^1}$, and thus to a rational point on $W$. By the weak Lang conjecture, the rational points on $W$ are not Zariski dense. The uniformity theorems imply that for $m_0$ large enough, there can be no curves with $m_0+1$ points, so $\Cc_{\ba_m}(k)=\emptyset$ for $m \ge m_0$. A similar argument using Theorem \ref{UGB} proves the strong version, which implies the existence of a bound independent of $k$.
\end{proof}

\section{Proof of the Main Theorem}
\label{proof-main-thm}
We now have all the necessary components to prove Theorem \ref{main}. The proof proceeds by contradiction, leveraging the uniformity results from the previous section and the connection between rank and the number of rational points provided by Theorem \ref{dgh}.

\begin{proof}[Proof of Theorem \ref{main}]
	Let $k$ be a number field of degree $d=[k:\Q]$ over $\Q$. Let $\mathcal{F}$ be the family of smooth curves over $k$ defined by equations of the form $y^s=ax^r+b$ with genus $g_{r,s}(C) = g \geq 1$. We assume, for the sake of contradiction, that the Mordell-Weil rank is unbounded on this family.
	
	This means there exists an infinite sequence of non-isomorphic curves $C_j \in \mathcal{F}$ such that their ranks $r_j = r(J_{C_j}(k))$ tend to infinity as $j \to \infty$.
	
	By Theorem \ref{dgh}, there exists a constant $c = c(g, d)$ such that for each curve $C_j$:
	\[ \#C_j(k) \leq c^{1+r_j}. \]
	Since $r_j \to \infty$, it follows that $\#C_j(k) \to \infty$.
	
	Now, we invoke the strong version of our uniformity result, which follows from the strong Lang conjecture (as in the proof of Theorem \ref{main2} and Theorem \ref{UGB}). This provides an integer $m_1$, depending only on $r$ and $s$, such that for any $m \geq m_1$, the set $\Cc_{\ba_m}(K)$ is empty for any number field $K$. This means no curve in the family $\mathcal{F}$ over any number field $K$ can have $m+1$ rational points with distinct $x$-coordinates if $m \geq m_1$.
	
	Since $\#C_j(k) \to \infty$, we can choose an index $J$ large enough such that $\#C_J(k) > s \cdot (m_1+1)$. A point on $C_J$ is of the form $(\alpha, \beta)$, and for any fixed $x$-coordinate $\alpha$, there are at most $s$ corresponding $y$-coordinates $\beta$. Therefore, the number of distinct $x$-coordinates among the points in $C_J(k)$ is at least $\lceil \#C_J(k)/s \rceil > m_1+1$.
	
	We can thus select $m_1+2$ points from $C_J(k)$ that have distinct $x$-coordinates. Let these points be $(\alpha_0, \beta_0), \dots, (\alpha_{m_1+1}, \beta_{m_1+1})$. Let $\ba_{m_1+1}$ be the point in parameter space corresponding to these $x$-coordinates. By definition, the curve $C_J$ is an element of the set $\Cc_{\ba_{m_1+1}}(k)$.
	
	However, since $m_1+1 > m_1$, our uniformity result implies that $\Cc_{\ba_{m_1+1}}(k)$ must be empty. This is a contradiction, as we have found a curve $C_J$ that lies in this supposedly empty set.
	
	Therefore, our initial assumption must be false. The rank of Jacobian varieties in the family $\mathcal{F}$ must be bounded.
\end{proof}

\section{A Rank 17 Elliptic Curve over $\Q$}
\label{examples}
We illustrate the abstract construction with a concrete example to show how specific high rank elliptic curve  with a known set of rational points correspond to a single rational point on the high-genus fiber variety $\Xc_{\ba_n}$.

In 2016, Elkies discovered an elliptic curve of rank 17 with $j=0$, given by
\[ E: y^2 = x^3 + b_0, \quad b_0 = 24537619889008718205152851658505801. \]
This curve contains 17 known independent points $P_0, \dots, P_{16} \in E(\Q)$, as follows:	
{\small
	\begin{align*}
		P_0   & =  [-249954149276, 94452185380426435],    & P_1  & =  [-218829008658, 118569576333381183],           \\
		P_2   & =  [-110315760690, 152299457785937151],   & P_3  & =  [-12083686365, 156639252691623474],            \\
		P_4   & =  [179588218407, 174154202398188288],    & P_5  & =  [194693247690, 178654854781822599],            \\
		P_6   & =  [481938369495, 369425010854453724],    & P_7  & =  [527526224524, 413931980240076925],           \\
		P_8   & =  [532637728899, 419104420151289750],    & P_9  & =  [660796972800, 559532270810391651],           \\
		P_{10} & =  [891937317975, 856808203106532276],   & P_{11} & =  [1369152212199, 1609695603071293320],     \\
		P_{12} & =  [1556910033324, 1948958451538253955], & P_{13} & =  [2095375244992, 3037184017947911267],    \\
		P_{14} & =  [3020920353232, 5252935870900542563], & P_{15} & =  [45908680009155, 311058636438867847974],  \\ 
		P_{16} & =  [209109621212430, 3023855428577131273599].   
\end{align*}}
Let $\alpha_i = x(P_i)$ be their $x$-coordinates. We consider the point $\ba_{16} = (\alpha_0, \dots, \alpha_{16})$. This single elliptic curve corresponds to a single rational point
$[y(P_0): \cdots : y(P_{16})]$ 
on the fiber curve $\Xc_{\ba_{16}}$ given by
the following equations,

{\scriptsize
	\begin{align*}  
		c Y_2^2  & = 14273909518752011104805996875187576 Y_1^2 - 9136380410012945144031390143517312 Y_0^2,  \\
		c Y_3^2  & = 15614640160651830564600703341019451 Y_1^2 -10477111051912764603826096609349187 Y_0^2, \\
		c Y_4^2  & = 21408470889810690063581042253561719 Y_1^2 -16270941781071624102806435521891455 Y_0^2, \\
		c Y_5^2  & = 22996341813975679987992445860305576 Y_1^2 -17858812705236614027217839128635312 Y_0^2,\\
		c Y_6^2  & = 127553623321674810616820424010658951 Y_1^2 -122416094212935744656045817278988687 Y_0^2,\\
		c Y_7^2  & = 162418468942332992713014707470646400 Y_1^2 -157280939833593926752240100738976136 Y_0^2,\\
		c Y_8^2  & = 166727299667210364694533366008253275 Y_1^2 -161589770558471298733758759276583011 Y_0^2, \\
		c Y_9^2  & = 304155146755095019623144945563696576 Y_1^2 -299017617646355953662370338832026312 Y_0^2, \\
		c Y_{10}^2 & = 725199081587506223753723959382930951 Y_1^2 -720061552478767157792949352651260687 Y_0^2,\\
		c Y_{11}^2 & = 2582198719223916255279881435029813175 Y_1^2  -2577061190115177189319106828298142911 Y_0^2 , \\
		c Y_{12}^2 & = 3789517830499250148872819507626332800 Y_1^2 -3784380301390511082912044900894662536 Y_0^2,  \\
		c Y_{13}^2 & = 9215565543555079748052029625658736064 Y_1^2 -9210428014446340682091255018927065800 Y_0^2,\\
		c Y_{14}^2 & = 27584414048470503122920101305327799744 Y_1^2  -27579276519361764056959326698596129480 Y_0^2,\\
		c Y_{15}^2 & = 96757466381992441404259415168107529095451 Y_1^2 -96757461244463332665193454393500797425187 Y_0^2,\\
		c Y_{16}^2 & = 9143701644014170929876417817964781347603576 Y_1^2 -9143701638876641821137351857190174615933312 Y_0^2,
\end{align*}}
where $c=513752910873906596077460673167026$.

According to Proposition \ref{p1}, the genus of this  curve is:
\[ g(\Xc_{\ba_{16}}) = 1 + \frac{2^{15}((16-1)(2-1)-2)}{2} =   212993. \]
The existence of this famous rank 17 curve is thus encoded as the existence of a single rational point on a curve of genus 212,993. 
%The explicit equations defining this curve are given in the appendix. 
This dramatically illustrates the principle of the construction: information about a set of points on one curve is transformed into information about a single point on a much more complex curve.

\bibliography{BMW-bib-1}

\end{document}